\newcommand{\remove}[1]{}
\newtheorem{theorem}{Theorem}[section]
\newtheorem{example}{Example}[section]
\newtheorem{lemma}{Lemma}[section]
\newenvironment{proof}[1][Proof]{\textbf{#1.} }{\ \rule{0.5em}{0.5em}}
\newcommand{\N}{\mathbb{N}}
\newcommand{\Z}{\mathbb{Z}}
\numberwithin{equation}{section}
\begin{document}
\title{Reconstruction of a multidimensional scenery 
with a branching random walk} 
\author{ Heinrich Matzinger, \thanks{School of Mathematics, Georgia
 Institute of Technology, Atlanta, GA 30332.\hfill\break Email:
 matzi@math.gatech.edu}\;
Serguei Popov\thanks{Instituto
de Matem{\'a}tica, Estat{\'\i}stica e Computa\c{c}\~ao
Cient\'\i fica, rua S\'ergio Buarque de Holanda 651, Cidade
Universit\'aria, CEP 13083--859, Campinas SP, Brasil.\hfill\break
Email: popov@ime.unicamp.br}\; and  
Angelica Pachon\thanks{Department of  Mathematics, University of Turin, Via Carlo Alberto 10, 10123, Turin, Italy. 
\hfill\break Email: apachonp@unito.it}}

\maketitle

\begin{abstract} 
 In this paper we consider a $d$-dimensional
scenery seen along 
a simple symmetric branching random walk, where at each time each
particle  gives 
the color record it is seeing.   We show that
we can $a.s.$  reconstruct the scenery up to equivalence from the color
record of all the particles. 
For this we assume 
that the scenery has at least  $2d+1$ colors  which are $i.i.d.$ 
with uniform probability.  This is an improvement
in comparison to \cite{Popov-Angelica} where the particles needed
to see at 
each time
a window around their current position. 
In~\cite{Lowe-Matzinger-two-dim}
 the  reconstruction
is done for $d=2$ with only one particle instead of a branching
random walk,
but millions of colors are necessary.  
\end{abstract}


\section{Introduction}
The classical \textit{scenery reconstruction
problem} con\-siders a coloring $\xi:\mathbb{Z}\rightarrow\{1,2,\ldots,\kappa\}$ of the integers $\mathbb{Z}$.   To $\xi$ one refers as {\it the scenery} and to $\kappa$
as the {\it number of colors of  $\xi$}.  Furthermore,  a particle moves according to a  
a recurrent random walk $S$ on $\mathbb{Z}$ and at each instant of
time $t$ observes the color
$\chi_t:=\xi(S_t)$ at its current position $S_t$. The scenery reconstruction problem is formulated through the  following question: 
From a color record 
$\chi=\chi_0\chi_1\chi_2\ldots$, can one
 reconstruct the scenery $\xi$? The complexity of the scenery reconstruction problem varies with the number of colors $\kappa$, it generally increases as $\kappa$ decreases.

In \cite{Lindenstrauss} it was shown that  there are some strange
sceneries which can not be reconstructed. However, it is
possible  to show  that $a.s.$ a ``typical'' scenery, drawn at random according to a given
distribution, can be
reconstructed (possibly 
up to shift and/or reflection). 


In \cite{Heini2color}, \cite{Heini3color}, and  \cite{HeinisDiss} 
it is proved that almost every scenery can be
reconstructed in the one-dimensional case. 
In this instance  combinatorial methods are used, see for example 
\cite{messagetext}, \cite{Lowe-Matzinger-Merkl2001}, \cite{Matzinger-Angelica}, \cite{popovscen}, 
\cite{Matzinger-Rolles2001}, \cite{Matzinger-Rolles-small} and \cite{Matzinger-Rolles-polynomial}. However, all the techniques used in one
dimension completely fail  in two dimensions, see \cite{KestenReview}.

In \cite{Lowe-Matzinger-two-dim} a reconstruction algorithm 
for the $2$-dimensional case is provided. However,  in  \cite{Lowe-Matzinger-two-dim} 
the number of colors in the scenery  is very high (order of several billions). 
The $d$-dimensional case is approached in 
\cite{Popov-Angelica} using a branching random walk and assuming
that at each instant of time $t$, each particle is seeing the
observations contained in a box centered at its current location.

In the present article we show that we can reconstruct a $d$-dimensional 
scenery using again a branching random walk but only observing at each instant of
time $t$, the color at the current positions of the particles.  For this we
only need  at least $2d+1$ colors.
Our method exploits the combinatorial nature of the $d$-dimensional problem in an
entirely novel way.


The scenery reconstruction problem goes back
to questions from Kolmogorov, 
Kesten, Keane, Benjamini, Perez, Den Hollander and others.
A related well known problem is to {\it distinguish sceneries}: Benjamini, den
Hollander, and Keane independently asked whether all non-equivalent
sceneries could be distinguished. An outline of this problem is as follows: Let 
$\eta _{1}$ and $\eta _{2}$ be two given sceneries, and sssume that either $\eta
_{1}$ or $\eta _{2}$ is observed along a random walk path, but we do not
know which one. Can we figure out which of the two sceneries was taken?
Kesten and Benjamini in \cite{BenjaminiKesten} proved 
that one can distinguish almost  every pair of sceneries even in two dimensions and with only two colors. For this they take $\eta_{1}$ and $\eta_{2}$ both $i.i.d.$ and
independent of each other. 
Prior to that, Howard had shown in \cite{Howard1}, \cite{Howard2}, and \cite{Howard3} that
any two periodic one dimensional non-equivalent sceneries are
distinguishable, and that one can $a.s.$ distinguish single defects
in periodic sceneries.  {\it Detecting a single defect in a
scenery} refers to the problem of distinguishing two sceneries which
differ only in one point. Kesten proved in \cite{KestenSingDef} that one can $a.s.$
recognize a single defect in a random scenery with at least five colors.
As fewer colors lead to a reduced amount of information, it was conjectured that
it might not be possible to detect a single defect in a 2-color random scenery. However, in \cite{HeinisDiss} the first named author showed that the opposite 
is true - a single defect in a random 2-color scenery can be detected. He also proved that the whole scenery can be reconstructed without any prior knowledge. 

One motivation to study scenery reconstruction and distinguishing problems
was the {\it $T,T^{-1}$-problem} that origins in a famous conjecture in ergodic theory due to Kolmogorov. He demonstrated that every Bernoulli shift $T$ has a trivial tail-field and conjectured that also the converse is true. Let $\mathcal{K}$ denote the class of all transformations having a trivial tail-field. Kolmogorov's conjecture was shown to be wrong by Ornstein in \cite{Ornstein}, who presented an example of a transformation which is in $\mathcal{K}$ but not Bernoulli.  His transformation was particularly constructed to resolve Kolmogorov's 
conjecture.   In 1971 Ornstein,
Adler, and Weiss came up with a very natural example which is $\mathcal{K}$
but appeared not to be Bernoulli, see \cite{Weiss}.  This was the so called {\it $T,T^{-1}$-transformation}, and the $T,T^{-1}$-problem was to verify that it was not Bernoulli. It was solved by Kalikow in \cite{Kalikow}, showing that the $T,T^{-1}$-transformation is not even loosely Bernoulli.  A  generalization of this result was recently proved by den Hollander and Steif
\cite{denHollanderSteif}.\\

\section{Model and Statement of Results}
We consider a random coloring of the integers in $d$-dimension.
Let $\xi_z$ be $i.i.d.$ random variables, where the index
$z$ ranges over $\mathbb{Z}^d$. The variables $\xi_z$
take values from the set $\{0,1,2,\ldots,\kappa-1 \}$ where $\kappa
\geq4$, and  all the values from $\{0,1,2,\ldots,\kappa-1 \}$ have the same
probability. A realization of  $\xi=\{\xi_z\}_{z\in\Z^d}$  thus is a 
(random) coloring of $\mathbb{Z}^d$. We call this random field~$\xi$ a {\it $d$-dimensional scenery}. 

Now assume that a branching random walk (BRW) on $\mathbb{Z}^d$ observes
the $d$-dimensional scenery $\xi$, i.e.,  i.e. that each
particle of the BRW  observes the color at its current position. 

Formally a branching random walk in $\Z^d$ is described as
follows.  The process starts with  one particle at the origin, then at each
step, any particle is substituted by two particles with probability~$b$
and is left intact with probability~$1-b$, for some fixed
$b\in(0,1)$.  We denote by $N_n$ the total number of particles at
time~$n$. Clearly $(N_n, n=0,1,2,\ldots)$ is a Galton-Watson process with the 
branching probabilities ${\tilde p}_1=1-b$, ${\tilde p}_2=b$.
This process is supercritical, so it is clear that $N_n\to\infty$
$a.s.$  Furthermore, each particle follows the path of a simple random walk,
i.e., each particle jumps to one of its nearest neighbors chosen with
equal probabilities,  independently of everything else.

To give the formal definition of the observed process we first introduce some notation.
 Let $\eta_t(z)$ be the number of particles at site~$z \in \Z^d$ at time~$t\geq0$, with $\eta_0(z)=\text{\bf{1}}\{z=0\}$.  
We denote by $\eta_t=(\eta_t(z))_{z \in\Z^d}$ the configuration
at time~$t$ of the branching random walk on $\Z^d$, starting at the origin  with branching probability~$b$. 

Let $G$ be the genealogical tree of the Galton-Watson process, 
where $G_t=\{v_1^{t},\dots,v_{N_{t}}^{t}\}$  are the particles of $t$-th generation. Let  $S(v_j^t)$ be the position of $v_j^t$ in $\Z^d$, i.e., $S(v_j^t)=z$ if the $j$-th particle at time $t$ is located
on the site $z$. Recall that  we do not know the position of the
particles, only the color record made by the particles at every time, as well
as the  number of particles at each time.

According to our notations,  $\{ z \in\Z^d: \eta_t(z)\geq 1\}=\{\exists j;  S(v_j^t)=z, j=1,\dots,N_t\}$.
The {\it observations} $\chi$, to which we also refer as
{\it observed process}, is a coloring of the random tree $G$.
Hence the observations $\chi$  constitute a random map:
\begin{eqnarray*}
\chi: G_t&\rightarrow& \{0,1,2,\dots,\kappa-1\}\\
 v_i^t&\mapsto& \chi(v_i^t)=\xi(S(v_i^t)), \quad i=\{1,\dots,N_t\}.
\end{eqnarray*}
In other words, the coloring $\chi$ of the random tree $G$
yields the color the particle $i$ at time $t$ sees in the scenery from
where it is located at time $t$.\\

 Denote by $\Omega_1=\{(\eta_t)_{t\in  \N}\}$  the space of all
possible  ``evolutions'' of the branching random walk, 
by $\Omega_2=\{0,1,2,\ldots,\kappa-1\}^{\Z^d}$  the space of all possibles sceneries
$\xi$  and by  $\Omega_3=\{(\chi_t)_{t\in\N}\}$ the space of all 
possible realizations of the observed process.
We assume that $(\eta_t)_{t \in \N}$ and~$\xi$ are independent and 
distributed with the laws $P_1$ and $P_2$, respectively. 

Two sceneries~$\xi$ and~$\xi'$ are said to be  {\it equivalent}  (in this
case we write $\xi\thicksim\xi'$), if there exists an isometry
$\varphi:\Z^d\mapsto \Z^d$ such that $\xi(\varphi x)=\xi'(x)$ for all~$x$.

Now we  formulate the main result of this paper. The measure 
$P$ designates the product measure $P_1 \otimes P_2$.
\begin{theorem}
\label{teorema1}
For any $b\in (0,1)$ and $\kappa \geq 2d+1$ 
there exists a measurable function $\Lambda:\Omega_3 \to \Omega_2$ such that
$P(\Lambda(\chi) \thicksim \xi)=1$.  
\end{theorem}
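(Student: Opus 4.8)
The plan is to reconstruct $\xi$ from $\chi$ in several stages, combining a "local" combinatorial step with a stitching procedure. First I would show that with the coloring having $\kappa\ge 2d+1$ symbols, a typical finite ball in $\xi$ is \emph{rigid}: the color pattern on it occurs in $\xi$ (up to isometry) only in essentially one place, and moreover short random-walk pieces running through it leave a recognizable signature. Concretely, fix a large radius $R$; by the i.i.d.\ uniform assumption and a Borel--Cantelli argument, $a.s.$ every ball $B(z,R)$ in $\xi$ has a color pattern that is \emph{non-self-overlapping} in a strong sense (no nontrivial isometry of $\Z^d$ maps a constant fraction of the pattern onto a matching region) and also differs, in at least one coordinate direction, from every other ball at distance $\le e^{R}$, say. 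This is the standard "combinatorial genericity of the scenery" step and is where the hypothesis $\kappa\ge 2d+1$ must be used: $2d+1$ is exactly what is needed so that, reading the colors seen by a particle along $2d$ consecutive steps plus its current color, one can $a.s.$ determine the sequence of moves it made (each of the $2d$ neighbors, together with "stay at the parent color", must be locally distinguishable). I would isolate this as a deterministic lemma about patterns and then verify it holds $a.s.$ for $\xi$.

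Next I would exploit the branching structure. Since $N_n\to\infty$ $a.s.$ and the BRW is on $\Z^d$, $a.s.$ for every $z\in\Z^d$ and every $n_0$ there is a time $n\ge n_0$ and a particle sitting exactly at $z$; more strongly, $a.s.$ every finite self-avoiding (or nearly self-avoiding) lattice path of length $L$ starting at the origin is realized as the trajectory of \emph{some} particle in the tree $G$, infinitely often. The reconstruction function $\Lambda$ will not know \emph{which} particle realizes a given path, but it can search the tree $G$ for branches whose color record is consistent with a prescribed finite walk pattern. The key point, using the rigidity lemma, is: given the color record along a branch of $G$, one can $a.s.$ decode the displacement vector the corresponding particle accrued, up to the finitely many ambiguities allowed by isometries of $\Z^d$ fixing the origin. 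So from $\chi$ one recovers, for larger and larger $R$, a reconstructed copy $\widehat\xi_R$ of the ball $B(0,R)$ in $\xi$ together with a coherent coordinate frame; the frames for different $R$ are made compatible by overlap (two reconstructed balls share a large common sub-ball, and by the non-self-overlap property the gluing isometry is unique), so the $\widehat\xi_R$ converge to a single field $\widehat\xi\sim\xi$. Finally I would check that every step — searching $G$, applying the decoding lemma, gluing — is a measurable operation on $\chi$, so $\Lambda$ is measurable, and that the full-measure sets on which the scenery is generic and the BRW realizes all needed paths intersect in a set of $P$-measure one.

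The main obstacle, and the part deserving the most care, is the decoding step: turning the color record seen along a branch of the genealogical tree into the actual lattice trajectory (hence displacement) of that particle. The difficulty is twofold. First, at a branching event the two offspring start at the \emph{same} site, so the tree records correlated copies of overlapping walk segments, and one must argue that this redundancy helps rather than hurts — in fact it is what lets us see the same piece of scenery from several directions and pin down orientation. Second, a simple random walk on $\Z^d$ is recurrent only for $d\le 2$; for $d\ge 3$ a given particle's trajectory need not return near the origin, so the argument that "all finite paths appear" must be extracted from the branching (exponential growth of $N_n$) rather than from recurrence of a single walker, and one must control the probability that \emph{within the available budget of particles} a desired finite path-plus-color-signature appears — this is a first/second-moment estimate on the BRW that I expect to be the technical heart of the proof. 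Once a desired path's color signature can be both \emph{recognized} when it occurs (rigidity) and \emph{guaranteed} to occur $a.s.$ (BRW abundance), the passage to $R\to\infty$ and the measurability of $\Lambda$ are routine.
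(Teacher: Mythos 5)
Your global architecture (a.s.\ combinatorial genericity of the scenery, BRW abundance guaranteeing that every finite nearest-neighbor path in a suitable box is traced by some particle, reconstruction of larger and larger pieces glued along uniquely-matching overlaps, Borel--Cantelli to pass to the infinite scenery) is essentially the paper's. But the local step your plan funnels through has a genuine gap: the claimed mechanism for using $\kappa\ge 2d+1$ --- that the current site together with its $2d$ neighbors is a.s.\ locally distinguishable, so that the color record along a branch of $G$ can be decoded into the particle's actual lattice trajectory (hence its displacement) --- is false. Under the i.i.d.\ uniform measure the probability that two neighbors of a given site share a color is bounded away from zero, so a.s.\ infinitely many sites have ambiguous neighborhoods and single moves cannot be read off locally; more globally, a color record along a branch does not determine the path even up to isometry (an alternating record $abab\ldots$ is compatible with many different walks, and any path with revisits creates further ambiguity). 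Since the recovery of displacements, the coordinate frame, and the relative placement of reconstructed balls all rest on this decoding lemma, the proposal as written does not go through.

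The paper never decodes trajectories. The hypothesis $\kappa>2d$ enters instead as an entropy/counting inequality: for a word $w$ of length $(\ln n)^2$ written along a straight segment, there are at most $(2d)^{k-1}$ nearest-neighbor paths of that length from a fixed starting point, while any fixed path starting outside the ``diamond'' of $w$ reproduces $w$ with probability $\kappa^{-k}$; a union bound over paths and over the polynomially many starting points in the box shows that whp only paths starting and ending in the diamond can generate $w$. This justifies the two-criterion selection of short words (the string $w_1w_2w_3$ is observed by time $n^2$, and no other $w$ with $w_1ww_3$ appears by time $n^4$), which extracts exactly the straight-line words of the scenery; these are assembled DNA-style into long words, a run of the first two phases at parameter $n^{0.25}$ supplies a seed anchoring the piece within $\sqrt n$ of the origin, and adjacency of parallel long words is detected not from decoded displacements but by comparing the observation patterns $v_av_bv_c$ and $v_aw_bv_c$ produced by a straight path and by a path that steps sideways for one stretch. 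To repair your argument you would need to replace the trajectory-decoding lemma by a device of this kind; with only $2d+1$ colors no form of local move-identification is available.
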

The function $\Lambda$ represents an ``algorithm'',  the observations $\chi$ being its input and the reconstructed scenery $\xi$ (up to equivalence) being its output.
The main idea used to prove Theorem~\ref{teorema1} is to show how to
reconstruct a finite piece of the scenery (close to  the origin). 

\subsection{Main Ideas}
We start by defining a reconstruction algorithm with parameter~$n$
denoted by $\Lambda^n$, which works with all the observations
up to time $n^2$ to reconstruct a portion of  the scenery $\xi$
close to the origin. The portion reconstructed
should be the restriction of $\xi$ to a box with center
close to the origin. This means closer
than $\sqrt{n}$, which is a lesser order than the size of the
piece reconstructed. We will show that the algorithm
$\Lambda^n$ works with \textit{high probability}  ($whp$ for short, meaning with probability tending to one as $n\rightarrow\infty$).

Let us denote by $\mathcal{K}_x(s)$  the box of size $s$ 
centered at $x$ in $\Z^d$, i.e., $x+[-s,s]^d$, 
and  by $\mathcal{K}(s)$   the box of size $s$ 
centered at the origin of $\Z^d$. For a subset $A$ of $\mathbb{Z}^d$, we designate by
$\xi_A$ the restriction of $\xi$ to $A$, so $\xi_{\mathcal{K}_x(s)}$ denotes the restriction of $\xi$ to $\mathcal{K}_x(s)$.

In what follows, we will say that $w$ is a word of size $k$
of $\xi_A$, if it
can be read in a straight manner in $\xi_A$.
This means that $w$ is a word of $\xi_A$
if there exist an $x\in \mathbb{Z}^d$ and a canonical vector $\vec{e}$ (it defined to be one that
has only one non-zero entry equal to $+1$ or 
$-1$.),
so that $x+i\vec{e}\in A$, for all $i=0,1,2,\ldots,k-1$,  and
$$
w=\xi_{x}\xi_{x+\vec{e}}\xi_{x+2\vec{e}}\ldots
\xi_{x+(k-1)\vec{e}}.
$$

Let $A$ and $B$ be two subsets of $\mathbb{Z}^d$.
Then we say that   $\xi_A$ and $\xi_B$
 are equivalent to each other and write
$\xi_A\thicksim\xi_B$, if there exists an isometry $\varphi:A\mapsto B$ such that,
$\xi_A\circ \varphi=\xi_B$.

\subsection{The algorithm $\varLambda^n$ for reconstructing a finite piece of scenery close to the origin.} 
The four phases of this algorithm are described
in the following way:
\begin{enumerate}
\item{}{\bf First phase: Construction of short words of size $(\ln n)^2$.} The first phase aims at reconstructing words of  size $(\ln n)^2$ of $\xi_{\mathcal{K}(n^2)}$. The set of words
constructed in this phase is denoted by  $SHORTWORDS^n$. It
should hopefully  contain all words of size $(\ln n)^2$ in
$\xi_{\mathcal{K}(4n)}$, and be contained in the set of all words of size $(\ln n)^2$ in
$\xi_{\mathcal{K}(n^2)}$. The accurate definition of
$SHORTWORDS^n$  is as follows:  a word $w_2$ of size $(\ln n)^2$ is going to be selected to
be  in $SHORTWORDS^n$ if there exist two strings
$w_1$ and $w_3$ both of size $(\ln n)^2$ and such that:
\begin{enumerate}
\item{} $w_1w_2w_3$ appears in the observations before time
$n^2$, and
\item{} the only word $w$ of size $(\ln n)^2$ such that
$w_1ww_3$ appears in the observations up to time $n^4$ is $w_2$.
\end{enumerate}
Formally, let $W(\xi_{\mathcal{K}(4n)})$ and $W(\xi_{\mathcal{K}(n^2)})$ be the sets of all  words of size $(\ln n)^2$ in $\xi_{\mathcal{K}(4n)}$ and $\xi_{\mathcal{K}(n^2)}$
respectively, then 
\begin{eqnarray}\label{ShW}
W(\xi_{\mathcal{K}(4n)})\subseteq SHORTWORDS^n\subseteq
W(\xi_{\mathcal{K}(n^2)}).
\end{eqnarray}
We prove that (\ref{ShW}) holds $whp$ in Section~\ref{s_1phase}.

\item{}{\bf Second phase: Construction of long words of size $4n$.} The second phase assembles the words of $SHORTWORDS^n$
into longer words to construct another set of words denoted by
$LONGWORDS^n$. 
The rule is that the words of $SHORTWORDS^n$
to get assembled must coincide on $(\ln n)^2-1$ consecutive \-letters,  and it is done until getting strings of total 
size exactly equal to $4n+1$.
In this phase,  let $\mathcal{W}_{4n}(\xi_{\mathcal{K}(4n)})$ 
and $\mathcal{W}_{4n}(\xi_{\mathcal{K}(n^2)})$ be the set of all  words of size
$4n$ in $\xi_{\mathcal{K}(4n)}$ and $\xi_{\mathcal{K}(n^2)}$
respectively, then
\begin{eqnarray}\label{LW}
\mathcal{W}_{4n}(\xi_{\mathcal{K}(4n)})\subseteq LONGWORDS^n\subseteq 
\mathcal{W}_{4n}(\xi_{\mathcal{K}(n^2)}).
\end{eqnarray}
We achieve that (\ref{LW}) holds $whp$ in  Section~\ref{s_2phase}.

\item{}{\bf Third phase: Selecting a seed word close to the origin.} The third phase selects
 from the previous long words one which is close  to the
origin. For that $\varLambda^n$ applies the previous two phases, but with the parameter being equal to $n^{0.25}$ instead
of $n$. In other words, $\varLambda^n$ choses one (any) word $w_0$ of 
$LONGWORDS^{n^{0.25}}$, and then choses the word $w_L$ in
$LONGWORDS^n$ which contains $w_0$ in such a way that the relative position of $w_0$ 
inside  $w_L$ is centered. (The middle letters of $w_0$ and $w_L$ must coincide).
$\varLambda^n$ places the word $w_L$ so that the middle letter of
 $w_0$ is at the origin. See Section~\ref{s_3phase}.
  
\item{}{\bf Forth phase: Determining which long words are neighbors of each other.}
The fourth phase place  words from $LONGWORDS^n$
in the correct relative position to each other,  thus assembling the scenery in a box
near the origin.  For this, $\varLambda^n$ starts with the first long-word
which was placed close to the origin in the previous phase.
Then, words from the set $LONGWORDS^n$ are placed parallel to each
other until a piece of scenery on a box of size $4n$
is completed. 

Let us briefly explain  how $\varLambda^n$ choses which words of $LONGWORDS^n$ are {\it
neighbors} of each other in the scenery $\xi_{\mathcal{K}(n^2)}$, (i.e.,
they are parallel and at distance 1). 
$\varLambda^n$ estimates that the words $v$ and $w$
which appear in $\xi_{\mathcal{K}(n^2)}$ are neighbors of each
other iff the three following conditions are all satisfied:
\begin{enumerate}
\item First, there exist $4$ words $v_a$, $v_b$, $v_c$ and
$w_b$ having all size $(\ln n)^2$ except for $v_b$ which has
size $(\ln n)^2-2$, and such that
the concatenation $v_av_bv_c$ is contained in $v$, whilst up to
time $n^4$ it is observed at least once $v_aw_bv_c$.
\item Second,
the word $w_b$ is contained in $w$.
\item Finally, the relative position of 
$v_b$ in $v$ should be the same as the relative 
position of $w_b$ in $w$. By this we mean that the middle letter
of $v_b$ has the same position in $v$ as  the middle letter  
of $w_b$ in $w$ has.
\end{enumerate}
See the precise definition in Section~\ref{s_4phase}.
\end{enumerate}
Let $\mathcal{B}$ be the set of all finite binary trees, and let $\chi_t$ designates all the observations made by the branching random walk up to time $t$. That  $\chi_t$ is the restriction of the coloring $\chi$ to the sub-tree $\cup_{i\leq t} G_i$.

The next result means that the algorithm $\Lambda^n$
works $whp$.

\begin{theorem}
\label{maintheorem}Assume that the number of colors $\kappa$
satisfies the condition that $\kappa\geq 2d+1$. 
Then,
\label{n-theorem}
for every $n\in \mathbb{N}$ large enough, the  map
$$
\Lambda^n: \{0,1,\ldots,\kappa-1\}^\mathcal{B}\rightarrow \{0,1,\ldots,\kappa-1\}^{\mathcal{K}(4n)} ,
$$ defined above as our algorithm satisfies

\begin{equation}
P\left[\exists x\in \mathcal{K}(\sqrt{n})\text{ so that }\;\Lambda^n(\chi_{n^4})\sim \xi_{\mathcal{K}_x(4n)}\right]
\;\;\geq\;\;
1-\exp(-C(\ln n)^2),
\end{equation} where
$C>0$ is a constant  independent of $n$.
\end{theorem}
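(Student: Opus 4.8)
The plan is to show that each of the four phases of $\Lambda^n$ performs as intended with probability at least $1-\exp(-C_i(\ln n)^2)$, and then to combine these estimates by a union bound. Let $E_1$ be the event that $(\ref{ShW})$ holds, $E_2$ the event that $(\ref{LW})$ holds, let $E_1'$, $E_2'$ be the analogous events when the scale parameter $n$ is replaced by $n^{0.25}$ (as used in Phase~3), let $E_3$ be the event that Phase~3 places a correctly reconstructed ``seed'' long word whose central letter sits at a point $x\in\mathcal{K}(\sqrt n)$, and $E_4$ the event that the neighbour test of Phase~4 is simultaneously correct for every pair of words in $LONGWORDS^n$. On $E_1\cap E_2\cap E_1'\cap E_2'\cap E_3\cap E_4$ the output $\Lambda^n(\chi_{n^4})$ agrees, up to an isometry, with $\xi_{\mathcal{K}_x(4n)}$ for the point $x$ produced by Phase~3 --- here one uses that $\mathcal{K}_x(4n)\subseteq\mathcal{K}(n^2)$ for $n$ large, so all the scenery information invoked is genuinely available --- and the complement has probability at most $\sum_i\exp(-C_i(\ln n)^2)\le\exp(-C(\ln n)^2)$. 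Thus it suffices to estimate each $E_i^{c}$.

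For Phase~1 we treat the two inclusions in $(\ref{ShW})$ separately. The left inclusion is a covering statement: given a straight word $w_2$ of length $(\ln n)^2$ inside $\xi_{\mathcal{K}(4n)}$, extend it along its line to a straight word $w_1w_2w_3$ of length $3(\ln n)^2$, which still lies in $\mathcal{K}(4n+2(\ln n)^2)\subseteq\mathcal{K}(n^2)$; one has to see that some particle reads $w_1w_2w_3$ in $3(\ln n)^2$ consecutive steps before time $n^2$. By time $n^2/2$ the BRW has order $\exp(cn^2)$ particles, and near any fixed site of $\mathcal{K}(4n)$ it has a comparable number; from each of them a straight run of length $3(\ln n)^2$ occurs independently with probability $(2d)^{-3(\ln n)^2}$, so the probability that no particle traverses a fixed segment is at most $\exp(-\exp(c'n^2))$, and summing over the $\mathrm{poly}(n)$ relevant segments keeps this negligible (the branching dependence is handled by passing to the descendants of an early generation, which evolve independently). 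The right inclusion, together with the fact that the selected word is exactly $w_2$, is the combinatorial heart: since $w_1$ and $w_3$ are random words of length $(\ln n)^2$ over $\kappa\ge 2d+1$ colours, with probability $1-\exp(-C(\ln n)^2)$ the pair $(w_1,w_3)$ can be observed, anywhere in the region the BRW explores up to time $n^4$, only along the straight line at its true location and with the correct gap between the two blocks, which forces the word read in between to be $w_2$. The exponent is produced by the union bound $(\#\mathrm{words})^{2}\kappa^{-(\ln n)^2}$ over the $\mathrm{poly}(n)$ sites visited by time $n^4$, and it is precisely here that $\kappa\ge 2d+1$ is used, to exclude reading a fixed random word along a non-straight path.

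Given $E_1$, Phase~2 is purely combinatorial: two straight words of $\xi_{\mathcal{K}(n^2)}$ overlapping in $(\ln n)^2-1$ letters must, with probability $1-\exp(-C(\ln n)^2)$ (no ``bad repetition'' of a length-$((\ln n)^2-1)$ word in the explored region), be collinear and consecutive, so the gluing rule recovers exactly straight words of $\xi_{\mathcal{K}(n^2)}$, while the left inclusion of $(\ref{LW})$ holds because, by the left inclusion of $(\ref{ShW})$, every length-$(\ln n)^2$ subword of a straight word of size $4n$ in $\xi_{\mathcal{K}(4n)}$ already lies in $SHORTWORDS^n$ and these chain together. Phase~3 merely runs Phases~1--2 at scale $n^{0.25}$ --- for which the same estimates give failure probability $\exp(-C(\ln n)^2)$ --- and picks any $w_0\in LONGWORDS^{n^{0.25}}$; by $(\ref{LW})$ at that scale $w_0$ is a straight word of size $4n^{0.25}$ contained in $\mathcal{K}(n^{1/2})=\mathcal{K}(\sqrt n)$, so its central letter is at some $x\in\mathcal{K}(\sqrt n)$, and since $4n^{0.25}\gg(\ln n)^2$ the no-repetition property makes $w_0$ occur uniquely, so the long word $w_L\in LONGWORDS^n$ containing it centrally is well defined and is the straight word read along the same line; $\Lambda^n$ then places $w_L$ with $w_0$'s centre at the origin, i.e.\ it places the scenery line through $x$.

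Finally, for Phase~4 one verifies that conditions (a)--(c) hold for a pair $v,w\in LONGWORDS^n$ exactly when $v$ and $w$ are neighbours in $\xi_{\mathcal{K}(n^2)}$. In the ``if'' direction, the observed block $v_aw_bv_c$ together with the uniqueness (valid $whp$) of the landmark words $v_a,v_c$ forces $w_b$ to be read along a path joining a neighbour of the end of $v_a$ to a neighbour of the known start of $v_c$; a displacement/parity count shows this path must be straight and parallel to the line of $v$, offset by one step in a canonical direction, and condition (c) pins that direction to the one pointing at $w$, so $v$ and $w$ are neighbours. In the ``only if'' direction, if $v$ and $w$ are neighbours the required words exist and $v_aw_bv_c$ is observed before time $n^4$ because some particle reads $v_a$ straight, hops once towards $w$, reads $w_b$ straight, hops back, and reads $v_c$ straight --- all within $3(\ln n)^2+O(1)$ steps --- which the BRW achieves $whp$; again the error is $\exp(-C(\ln n)^2)$, coming from the word-uniqueness union bounds. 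Since $E_1$ ensures $LONGWORDS^n$ contains every straight word of size $4n$ through $\mathcal{K}(4n)$, the correctly placed long words reconstruct $\xi_{\mathcal{K}_x(4n)}$, completing the argument. The step I expect to be the real obstacle is the ``uniqueness of landmark words'' input underlying the right inclusions in $(\ref{ShW})$ and $(\ref{LW})$ and the soundness of the Phase~4 test: one must show that a fixed random word of length $(\ln n)^2$ over $2d+1$ colours is not readable along any non-straight lattice path anywhere in the huge region the BRW occupies by time $n^4$, and that two such words never collide in incompatible positions --- and it is exactly these events, controlled by bounds of the form $\mathrm{poly}(n)\cdot\kappa^{-(\ln n)^2}$, that dictate the probability $1-\exp(-C(\ln n)^2)$, whereas the branching-random-walk covering estimates, despite needing some care with inter-particle dependence, fail only with super-exponentially small probability and are not the bottleneck.
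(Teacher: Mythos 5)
Your overall architecture is the same as the paper's: a union bound over phase-correctness events, BRW covering estimates whose failure probability is stretched-exponentially small, and word-uniqueness/diamond estimates of order $\mathrm{poly}(n)\cdot(2d/\kappa)^{(\ln n)^2}$ which dictate the $\exp(-C(\ln n)^2)$ bound. But there is a genuine gap in your Phase~1 soundness argument, i.e.\ in the inclusion $SHORTWORDS^n\subseteq W(\xi_{\mathcal{K}(n^2)})$. In the selection rule the flanking strings $w_1,w_3$ are arbitrary strings appearing in the observations; they need not be straight-line words of the scenery, hence need not have a ``true location'', and your landmark-uniqueness claim simply does not apply to them. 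A particle may wander non-straight during the first and last $(\ln n)^2$ steps, producing junk $w_1,w_3$, and then nothing in your argument prevents the middle string $w_2$ (also read along a non-straight stretch) from passing the uniqueness test and being accepted, even though it is not a word of $\xi_{\mathcal{K}(n^2)}$. Note also that the diamond property you invoke only constrains paths generating genuine scenery words, and even for those it forces the path to start and end in the diamond, not to be straight. The paper eliminates junk by a different mechanism, which is absent from your proposal: the event $B^n_4$ (Lemma \ref{B4}) guarantees that whenever the middle stretch of the generating path is not a straight coordinate segment of length $(\ln n)^2-1$, there exist two nearest-neighbour paths with the same endpoints producing \emph{different} observations, and $B^n_5$ (Lemma \ref{B5}) guarantees that by time $n^4$ some particle follows the modified path; hence the observed triple $w_1 w w_3$ with $w\neq w_2$ appears and criterion (b) of Phase~1 fails, see Lemma \ref{B}. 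Some argument of this type -- exhibiting a second, distinguishable observation with the same prefix $w_1$ and suffix $w_3$ -- is needed; without it the gluing in Phase~2 and the soundness of Phases~3--4, which all rest on $SHORTWORDS^n\subseteq W(\xi_{\mathcal{K}(n^2)})$, are not justified.

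Two smaller points. In Phase~4 your ``displacement/parity count'' overlooks the case in which the path joining the two diamonds stays on the line of $v$ and backtracks exactly once (case I of Lemma \ref{4phaseworks}); this case is consistent with the displacement constraint and is excluded in the paper not by parity but by $C^n_1$, since the longer word $w_b$ would then force $v_a$ or $v_c$ to occur in two distinct places of $\xi_{\mathcal{K}(n^2)}$. Finally, the binding union bound in Phase~1 is not $(\#\mathrm{words})^2\kappa^{-(\ln n)^2}$ but the count of the $(2d)^{(\ln n)^2}$ nearest-neighbour paths from a fixed starting point against $\kappa^{-(\ln n)^2}$, i.e.\ $(2d/\kappa)^{(\ln n)^2}$ summed over polynomially many starting points (Lemma \ref{B3}); you do say in words that $\kappa\ge 2d+1$ is what excludes non-straight readings, so this is a matter of stating the correct counting, not of substance.
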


In  words, the algorithm $\Lambda^n$ manages to reconstruct $whp$ a piece of the scenery $\xi$ restricted to a box of size $4n$ close to the origin. The center of the box
has every coordinate not further than $\sqrt{n}$ from the origin.
The reconstruction algorithm uses only observations up to time $n^4$.

We will give the exact proof of the above theorem in the next section,
but, before we present the main ideas in a less formal way
in the remainder of this section. We first want to note
that the algorithm $\Lambda^n$ reconstructs a piece of the scenery $\xi$
in a box of size $4n$, but the exact position of that
piece within $\xi$ will in general not be known
after the reconstruction. Instead, the above theorem insures
that $whp$ the center of the box is not further
than an order $\sqrt{n}$ from the origin.

For what follows we will need  a few definitions:  Let 
$[0,k-1]$ designate the sequence
$\{0,1,2,3\ldots,k-1\}$ and $R$  be a map
$R:[0,k-1]\rightarrow\mathbb{Z}^d$ such that
the distance between $R(i)$ and $R(i+1)$ is $1$ for every
$i=0,1,2,\ldots,k-2$. We call such a map a {\it nearest neighbor
path
of length $k-1$}. Let $x$ and $y$ be two points in $\mathbb{Z}^d$.
If $R(0)=x$ and $R(k-1)=y$ we say that $R$ goes from $x$ to~$y$.
We also say that $R$ starts in $x$ and ends in $y$. Let $w$ be a 
string of colors
of size $k$, such that
$$w=\xi(R(0))\xi(R(1))\ldots\xi(R(k-1)).$$
In that case, we say that {\it  $R$ generates $w$ on the scenery $\xi$}.

\subsubsection{The DNA-sequencing trick}
We use the same trick as is used in modern methods
for DNA-sequencing where  instead of reconstructing the whole
DNA-sequence at once, one tries to reconstruct smaller pieces
simultaneously. Then to obtain the entire piece one 
puzzles the smaller pieces together. In this paper
the scenery is multidimensional unlike DNA-sequences.
Nonetheless, we first present the reconstruction
method for DNA-sequencing in this subsection, because 
it is easiest to understand. The trick goes as follows. 
Assume that you have a genetic sequence  $D=D_1D_2\ldots D_{n^\alpha}$,
where $\alpha>0$ is a constant independent of $n$,
and  $D$ is written in an alphabet  with $\kappa>0$ equiprobable letters. For instance  
in  DNA-sequences this alphabet is
$\{A,C,T,G\}$.  To determine the physical order of these letters in a sequence of DNA, modern methods  use the idea of do not go for the whole sequence at once, but first determine small  pieces (subsequences)  of order at least $C\ln(n)$ and then assemble them to obtain the whole sequence. (Here $C>0$ is a constant independent of $n$, but
dependent of $\alpha$).  Let us give an example.

 \begin{example}
Let the sequence be equal to
$$D=TATCAGT,$$
and suppose a biologist in his laboratory is able  to find all subsequences
of size $5$. Typically, the direction in which these subsequences
appear in $D$ is not known.  The set of all subsequences  of size $5$ which appear in $D$ is 
$$TATCA,ATCAG,TCAGT,$$
as well as their reverses
$$ACTAT,GACTA,TGACT.$$
Which one are to be read forward and which one backward is not
known to the biologist.  He is only given all these subsequences in one 
bag without extra information. If he was given all
the subsequences of size $5$ appearing in $D$, he could reconstruct
$D$ by assembling these subsequences using the assembly 
rule that they must coincide on a piece of size $4$. Why? Consider the  set
of all subsequences of size $4$:
$$TATC,ATCA,TCAG,CAGT$$
and their reverses
$$CTAT,ACTA,GACT,TGAC.$$
Note that each of these appears only once. Hence, four consecutive letters
determine uniquely the relative position of the subsequences of size $5$
with respect to each other, and  given the bag of subsequences
of size $5$ is possible to assemble theme one after the other. How?
Start by  picking any subsequence. Then  put down the next subsequence from 
the bag which  coincides with the previous one on at least $4$ letters.
For example take $GACTA$ and put it down on the integers in any position, for instance
$$
\begin{array}{c|c|c|c|c}
G&A&C&T&A\\\hline
0&1&2&3&4.
\end{array}
$$
Then, take another subsequence from the bag which coincides with the previously chosen one,
on at least $4$ contiguous letters. For example, $TGACT$ satisfies this condition. Now superpose the new subsequence onto the previous one  so that on $4$ letters they coincide:
$$
\begin{array}{c|c|c|c|c|c}
 T&G&A&C&T& \\
  &G&A&C&T&A\\\hline
 -1&0&1&2&3&4
\end{array}\;\;\;\text{.This leads to:}\;\;\;
\begin{array}{c|c|c|c|c|c}
 T &G&A&C&T&A\\\hline
  -1&0&1&2&3&4
\end{array}
$$
Next, observe that the subsequence $ACTAT$ coincides on at least $4$ consecutive letters, with what has been reconstructed so far. So,  put $ACTAT$ down in a matching position:
$$
\begin{array}{c|c|c|c|c|c|c}
   & &A&C&T&A&T\\  
 T &G&A&C&T&A&\\\hline
-1 &0&1&2&3&4&5
\end{array}
\;\;\;\text{.This leads to:}\;\;\;
\begin{array}{c|c|c|c|c|c|c}  
 T &G&A&C&T&A&T\\\hline
-1 &0&1&2&3&4&5
\end{array}
$$
The final result  is the sequence
$TGACTAT$ which is $D$ read in reverse order.
\end{example}

But how do we know that this method works?  in the example 
we saw the sequence $D$ before hand, and could hence verify that
each subsequence of size $4$ appears in at most one position
in $D$. However, the biologist can not verify this condition.
He only gets the bag of subsequences as only information. 
So, the idea is that if we know the stochastic model
which generates the sequence, we can calculate that $whp$ each subsequence of size $C\ln(n)$ appears only once in $D$, provided the constant $C>0$ is taken large enough.
Take for example the $i.i.d.$ model with $\kappa>1$ equiprobability
letters. Then the probability that two subsequences located in different and
non-intersecting parts of $D$ be identical is given by:
\begin{equation}
\label{star}
P(E^{n+}_{i,j})=\left(\frac{1}{\kappa}\right)^{C\ln n}=n^{-C\ln(\kappa)},
\end{equation}
where $E^{n+}_{i,j}$ is the event
that
\begin{equation}
\label{wordswords}
D_{i+1}\ldots D_{i+C\ln n}=D_{j+1}\ldots D_{j+C\ln n}.
\end{equation}
Similarly, let $E^{n-}_{i,j}$ be the event that
\begin{equation}
\label{wordswords1}
D_{i+1}\ldots D_{i+C\ln n}=D_{j-1}\ldots D_{j-C\ln n}.
\end{equation}

Note that even if the subsequences given on both  sides of (\ref{wordswords}) or (\ref{wordswords1})  intersect, as long as they are not exactly in the same position, we get that (\ref{star})  still
holds. To see this take for example the subsequence $w=D_{i+1}D_{i+2}\ldots D_{i+C(\ln n)}$
and the subsequence $v=D_{i+2}D_{i+3}\ldots D_{i+C(\ln n)+1}$. These two subsequences
are not at all independent of each other since up to two letters they are 
identical to each other. However we still have 
$$P(w=v)=P(D_{i+1}D_{i+2}\ldots D_{i+C(\ln n)}=
D_{i+2}D_{i+3}\ldots D_{i+C(\ln n)+1})=
\left(\frac{1}{\kappa}\right)^{C\ln n}$$
To see why this holds, simply note that $D_{i+2}$ is independent of $D_{i+1}$, so  $w$ and $v$
 agree on the first letter with probability  $1/\kappa$.
Then, $D_{i+3}$ is independent of $D_{i+2}$ and $D_{i+1}$, so
the second letter of $v$ has a probability of $1/\kappa$ to be identical
to the second letter of $w$, and so on.
Thus, to get that $whp$ no identical subsequence  appears in any two different positions in $D$, we need to get a suitable upper bound of the right side of (\ref{star}). If we take  $i$ and $j$ both in $n^\alpha$, we find that the probability to have at least
one subsequence appearing in two different positions in  $D$, can be bounded in the following way:
\begin{equation}
\label{starstar}
P(\cup_{i\neq j}E^{n+}_{i,j}) \leq  \sum_{i\neq j}P(E^{n+}_{i,j}) \leq n^{2\alpha}\cdot n^{-C\ln(\kappa)}
\end{equation}
The same type of bound also holds for $P(E^{n-}_{i,j})$ using (\ref{wordswords1}).
We take $C$ strictly larger than $2\alpha/\ln \kappa$
in order to get the right side of (\ref{starstar})  be negatively polynomially
small in $n$.

For our algorithm $\Lambda^n$ we will take subsequences (the short words) to have size $(\ln n)^2$ instead of being
linear in $\ln n$. Thus we do not  have to think about the constant in front of 
$\ln n$. Then, the bound we get for the probability becomes even better than
negative polynomial in $n$. It becomes of the type $n^{-\beta n}$ where $\beta>0$
is a constant independent of $n$.

\subsubsection{Applying the DNA-sequencing method to a multidimensional scenery}
In our algorithm $\Lambda^n$  we do not have the task to reconstruct a sequence,
instead we have to reconstruct a multidimensional scenery restricted to a box. 
We use the same idea as the DNA-sequencing method except that we will reconstruct several
long words instead of reconstructing just one long word  (one sequence).
This corresponds to the second phase of $\Lambda^n$ where
with a bag of short words, it constructs a collection
of long words.  These words will be the different
restrictions of the scenery $\xi$ to straight lines-segments parallel to some  direction of coordinates. This long words of course do not yet tell us exactly
how the scenery looks,  we will  still need to position these long words correctly
with respect to each other. (This is done in the fourth phase of $\Lambda^n$). Let us explain that with another example.

\begin{example}
Let us assume $d=2$ and  the 
scenery $\xi$ restricted to the box $[0,4]\times[0,4]$ be given by:
\begin{equation}
\label{thescenery1}
\xi_{[0,4]\times[0,4]}=
\begin{array}{c|c|c|c|c}
1&9&4&3&7\\\hline
5&0&7&6&1\\\hline
4&3&9&1&2\\\hline
6&1&4&0&4\\\hline
2&7&8&0&3
\end{array}
 \end{equation}
Here we assume that the alphabet has size $\kappa=10$ and that we would be given a bag of all short words
of size $4$ appearing in (\ref{thescenery1}).
That is words which can be read horizontally or vertically
in either direction: from up to down, down to up,  left to right, right to left and of size 4.
This bag of short-words is:
\begin{eqnarray}
\label{SHORT}
&\{&1943,5076,4391,6140, 27803,
9437,0761,3912,1404,7803,\\\nonumber
& & 1546,9031,4794,3610,7124,
5462,0317,7948,6100,1243
\},
\end{eqnarray}
and their reverses.
As assemble rule we use that words must coincide on at least $3$
consecutive letters. We can for example assemble $1943$ with $9437$
to get $19437$. Similarly we  assemble $1546$ with $5462$ and obtain
$15462$. So, we apply the DNA-puzzling trick, but instead of 
reconstruct only one long word, we will reconstruct several long words.
In this example we reconstruct the set of $10$ long words and their reverses:
\begin{eqnarray}
\label{LONG}
&\{&19437,50761,43912,61404,27803,\\\nonumber
& & 15462,90317,47948,36100,71243\},
\end{eqnarray}
where again each of the above long words could be its own reverse.
\end{example}

Thus, the previous example shows  how the second phase of the algorithm works:
From a set of shorter words $SHORTWORDS^n$,  we obtain a set of longer words 
$LONGWORDS^n$ in the same manner  how we obtained the set (\ref{LONG}) from the bag of words (\ref{SHORT}).  Note that the long words are successfully reconstructed in this example, because in the restriction of the scenery in (\ref{thescenery1}), any word of size $3$ appears at most in one place.

The differences in the numeric example presented above and how the second phase of  $\Lambda^n$ works are the following: the short words in $\Lambda^n$ have length $(\ln n)^2$ and the long words have length $4n$, instead of 4 and 5. Moreover, $\Lambda^n$ will need to  assemble in the second phase  many short words to get one long word, despite in this example where we just used two short words to get  each long word.
On the other side, in the previous example  is given to us a bag of  all words 
of size $4$ of the restriction of $\xi$ to the box $[0,4]\times[0,4]$.  However,  the second phase of $\Lambda^n$ has the  bag of short words $SHORTWORDS^n$,  which is not exactly equal to all words  of size $(\ln n)^2$ of  $\xi$ restricted to a box.  Instead it is the bag of words that contains
all words  of size $(\ln n)^2$ of $\xi$, restricted to the box $\mathcal{K}(4n)$, but  augmented
by some other words of the bigger box $\mathcal{K}(n^2)$.

The bag of  short words $SHORTWORDS^n$ is obtained in the first phase of $\Lambda^n$. The reason why the first phase is not able to identify which words are in the box $\mathcal{K}(4n)$ and which are in  $\mathcal{K}(n^2)$ is as follows: the observations in the first phase of  $\Lambda^n$ are taken up to time $n^2$. Since we assume that the first particle 
starts at time $0$ at the origin, by time $n^2$ all particles must be contained
in the box $\mathcal{K}(n^2)$,  and the probability for one given particle
at time $n^2$ to be close to the border of $\mathcal{K}(n^2)$
is exponentially small. Since we have many particles, a few will
be close to the border of $\mathcal{K}(n^2)$ by time
$n^2$, and these will be enough particles to provide some few words
close to the border of $\mathcal{K}(n^2)$ by time $n^2$, and  selected in the first phase of the algorithm.

There is an important consequence to this in the second phase of the algorithm,  since
some reconstructed long words in $LONGWORDS^n$ might also be ``far out in the box 
$\mathcal{K}(n^2)$'' and not in $\mathcal{K}(4n)$.

\subsubsection{The diamond trick to reconstruct all the words
of $\xi_{\mathcal{K}(4n)}$}\label{diamond}
In the previous subsection, we showed how to assemble shorter words
to get longer ones, but we have not yet explained the main
idea of how we manage to obtain  short words in the first phase of $\Lambda^n$, being given only the observations.  
The basic idea is to use the {\it diamonds associated with a word
appearing in the scenery}. Let us look at an example.

\begin{example}
Take  the following piece of a two dimensional scenery
which would be the restriction of $\xi$ to the $[0,6]\times[0,4]$:
\begin{equation}
\label{thescenery}
\xi_{[0,6]\times[0,4]}=
\begin{array}{c|c|c|c|c|c|c}
2&1&9&{\color{blue!80!black}4}&3&7&4\\\hline
7&5&{\color{blue!80!black}0}&{\color{blue!80!black}7}&{\color{blue!80!black}6}&1&1\\\hline
7&{\color{green!50!black}4}&{\color{green!50!black}3}&{\color{green!50!black}9}&{\color{green!50!black}1}&
{\color{green!50!black}2}&1\\\hline
8&6&{\color{blue!80!black}4}&{\color{blue!80!black}4}&{\color{blue!80!black}0}&4&3\\\hline
2&2&7&{\color{blue!80!black}8}&0&3&9
\end{array}
 \end{equation}

Consider the word  $${\color{green!50!black}w=43912}
=\xi_{(1,2)}\xi_{(2,2)}\xi_{(3,2)}\xi_{(4,2)}\xi_{5,2}$$ which appears in the above piece
of scenery  in green. That word ``appears between the points
$x=(1,2)$ and $y=(5,2)$''. We only consider here words which are written 
in the scenery ``along the direction of a coordinate''.
In blue we highlighted the {\it diamond associated with the word}
$43912$. More precisely the diamond consists of all positions
which in the above figure are green or blue.  
\end{example}

The formal definition is that if $x$ and $y$ are two points in $\mathbb{Z}^d$ so
that $\bar{xy}$ is parallel to a canonical vector $\vec{e}$,
then the diamond associated with the word 
$w=\xi_x\xi_{x+\vec{e}}\xi_{x+2\vec{e}}\ldots \xi_{y-\vec{e}}\xi_{y}$
consists of all points in $\mathbb{Z}^d$ which can be reached
with at most $(|x-y|/2)-1$ steps from the point $(y-x)/2$.
We assume here that the Euclidean distance $|x-y|$ is
an odd number.

The useful thing will be that $whp$  for a given
non-random point $z$ outside the diamond associated with $w$, there is no nearest neighbor walk path  starting at $z$ and generating as observations  $w$. So, $whp$
a word $w$ in the scenery can only be generated as observations  by a 
nearest neighbor-walk path starting (and also ending) in the diamond associated with $w$. (At least if we restrict the scenery to a box of polynomial size in the length of $w$).
To see this take for instance the following path $R$:
$$((0,0),(1,0),(1,1),(1,2),(2,2))$$
In the previous  example  a random walker which would  follow
this little path would observe the sequence of colors given by
\begin{equation}
\label{xicircR}
\xi\circ R=(2,2,6,4,3)
\end{equation}

Note that  the path $R$ and the straight path from $x$ to $y$
intersect.  So,  (\ref{xicircR}) and the green word ${\color{green!50!black}w=43912}$ are not independent of each other. However, because the path $R$ starts outside the diamond, we get  the probability of the event  that (\ref{xicircR}) and the word  $\color{green!50!black}w$ are identical, has the same probability as  if they would be independent. That is assuming that $R$ is a  (non-random) nearest neighbor path starting (or ending) at a given point $z$ outside the diamond  associated with  a word $w$,
$$P(\xi\circ R=\xi_x\xi_{x+\vec{e}}\xi_{x+2\vec{e}}\ldots \xi_{y-\vec{e}}\xi_{y})=\left(\frac{1}{\kappa}\right)^m,$$ where $m$ designates the size of the word.
In fact looking at our example, $R$ starts at $(0,0)$ which is
outside the diamond. So, the starting point of $R$ 
is different from  $x=(1,2)$ and hence by independence
$$P(\xi(R(0))=\xi_x)=\frac{1}{\kappa}.$$
Then the second letter in $w$, that is $\xi_{x+\vec{e}}$
 independent of the first two letters of the observations along the path,
$\xi(R_0)\xi(R_1),$
since both points $R(0)=(0,0)$ and $R(1)=(1,0)$
are different from $x+\vec{e}=(2,2)$. 
Thus, we get the probability that
the first two letters of  $w$ coincide with the
first two observations made by $R$ is equal to:
\begin{equation}\label{Prob2}
P(\xi(R(0))\xi(R(1))=\xi_x\xi_{x+\vec{e}})=\left(\frac{1}{\kappa}\right)^2.
\end{equation}
The proof goes on by induction:  the $k$th letter in the word
 $w_{x+k\vec{e}}$  is independent of 
the first $k$ observations made by $R$. The reason is that
the first $k$ positions $R(0)R(1)\ldots R(k-1)$
visited by $R$ do never contain the point $x+k\vec{e}$, 
since ``the walker following the path $R$ never catches up
with the walker going straight from $x$ to $y$.''

On the other hand observe that in our example,
we see a path starting inside the diamond and producing
as observation the same  green word  $\color{green!50!black}w$. Take the path
$$(2,1),(2,2),(3,2),(4,2),(5,2).$$
Thus, this ``counterexample''  illustates how ``easy'' it is 
for a path starting ``inside'' the diamond associated with a word $w$, 
to generate the same word.  This a second  path different to  the straight path ``going from $x$ to $y$''.

Now, using (\ref{Prob2}) we can calculate an upper bound  for the probability that for a given non-random point $z$ outside the diamond, there exists at least one non-random
path $R$ starting at $z$ and producing as observation $w$.
Observe that in  $d$-dimensional scenery, for a given starting
point $z\in \mathbb{Z}^d$, there are  $(2d)^{k-1}$ nearest neighbor walk paths
of length $k$.  So, we find that the probability that there exists a nearest neighbor path $R$
starting at $z$, with $z$ outside the diamond associated with a word 
 $w$, and  $R$ generating $w$,  has  a probability 
bounded from above as follows:
\begin{equation}
\label{frac}
P\left(\exists \;\text{a nearest neighbor path $R$ starting at $z$ with 
}\xi\circ R=w \right)\leq \left(\frac{2d}{\kappa}\right)^{k-1}
\end{equation}

Note that the bound above is negatively exponentially small
in $k$ as soon as $2d<\kappa$. The inequality $2d<\kappa$
is precisely the inequality given in Theorem \ref{maintheorem} which makes our
reconstruction algorithm work).

Thus, in the next section we will define the event $B^n_3$ as follows:
Let $w$ be a word of size $(\ln n)^2$ in $\xi_{\mathcal{K}(n^2)}$, and 
$R$ a nearest neighbor  path, $R:[0,k-1]\rightarrow \mathcal{K}(n^4),$ so that
$\xi\circ R= w$, and $R$ begins and ends in
the diamond associated with $w$. 

Note then that $P(B^{nc}_3)$   is bounded from above by 
(\ref{frac}) times the number of points in the box $\xi_{\mathcal{K}(n^2)}$. But expression (\ref{frac}) is negatively exponentially small in the size of the word $(\ln n)^2$
and hence dominates the polynomial number of points in the box, i.e., it goes to zero as $n$ goes to infinity.  Thus,  $B^n_3$ holds $whp$.  (To see the exact proof, go to lemma 3.3.)

Now, we know that with high probability the words can only
be generated by a nearest neighbor walk path starting (and ending) in
the diamond associated with $w$. (At least when we restrict ourselves to 
a box of polynomial size in the length of $w$). But,  how can we use this 
to reconstruct words?  The best is to consider an example.

\begin{example}
For this let the restriction of the scenery $\xi$ to $[0,16]\times[0,4]$ be equal to
\begin{equation}
\label{thescenery2}
\xi_{[0,16]\times[0,4]}=
\begin{array}{c|c|c|c|c|c|c|c|c|c|c|c|c|c|c|c|c}
2&1&9&{\color{blue!80!black}4}&3&7&4&1&2&5&2&2&7&{\color{red}8}&0&6&9\\\hline
7&5&{\color{blue!80!black}0}&{\color{blue!80!black}7}&{\color{blue!80!black}6}&1&1&8&2&5&8&6&{\color{red}7}&{\color{red}4}&{\color{red}0}&4&2\\\hline
7&{\color{green!50!black}4}&{\color{green!50!black}3}&{\color{green!50!black}9}&{\color{green!50!black}1}&
{\color{green!50!black}2}&1&7&8&4&7&{\color{brown}6}&{\color{brown}1}&{\color{brown}7}&{\color{brown}7}&{\color{brown}7}&4\\\hline
8&6&{\color{blue!80!black}4}&{\color{blue!80!black}4}&{\color{blue!80!black}0}&4&3&5&3&6&7&5&{\color{red}1}&{\color{red}9}&{\color{red}9}&9&1\\\hline
2&2&7&{\color{blue!80!black}8}&0&3&9&4&3&7&2&1&9&{\color{red}4}&5&7&0
\end{array}
 \end{equation}
Let the word which appears in green
be denoted by $w_1$ so that 
$$w_1={\color{green!50!black}43912}$$
Let the word written in brown be denoted by $w_3$ so that
$$w_3={\color{brown} 61777}$$
Finally let the word which is written when we go straight from
the green word to the brown word be denoted by
$w_2$ so that
$$w_2=17847$$
In the current example the diamond $D_1$ associated with the green word
$w_1$ is given in blue and the diamond $D_3$ associated with the 
brown word $w_3$ is highlighted in red.
Note that there is only one shortest nearest neighbor path to go from $D_1$ to $D_3$,
walking straight from the point $(5,2)$ to the point $(11,2)$ in 
exactly six steps. There is not other way to go in six  steps
from $D_1$ to $D_3$. When doing so a walker will see the word $w_2$.
Now assume that the size of our short words is $5$, (that is the
 size which in the algorithm is given by the formula $(\ln n)^2$). Assume also that the rectangle
$[0,16]\times[0,4]$ is contained in $\mathcal{K}(n^2)$.

Remember that  if $B^n_3$ holds, we have that within the box $\mathcal{K}(n^2)$
a nearest neighbor walk can only generate a short word of 
$\xi_{\mathcal{K}(n^2)}$ if it starts and ends in the diamond associated
with that word. Using this to the words $w_1$ and $w_3$,  we see in the observations
the following pattern
$$w_1*****w_3$$
where $*$ is a wild card which stands for exactly one letter,
then $whp$ the walker between $w_1$ and $w_3$ was walking in a straight manner
from $D_1$ to $D_3$. Hence, the wild card sequence $*****$ must then be 
the word $w_2$. Of course, we need at least one particle to follow
that path in order to observe $w_1w_2w_3$ up to time
$n^2$. This will be taken care in $\Lambda^n$  by the event $B^n_2$ which stipulates
that any nearest neighbor path of length $3(\ln n)$ contained
in $\mathcal{K}(4n)$, will be followed by at least one particle up
before time $n^2$.  In other words we have proven that if
$B^n_2$ and $B^n_3$ both hold, then $w_2$ gets selected by the first
phase of the algorithm as a short word of $SHORTWORDS^n$.
The argument of course works for any short word of 
$\xi_{\mathcal{K}(4n)}$ and hence we have that
$$B^n_2\cap B^n_3 \implies W(\xi_{\mathcal{K}(4n)})\subset SHORTWORDS^n
$$
\end{example}

Thus the previous example  shows how the first phase of the algorithm manages to reconstruct 
all short words in $\xi_{\mathcal{K}(4n)}$. 

\subsubsection{How to eliminate junk observation-strings which are not words of 
$\xi_{\mathcal{K}(n^2)}$}
In the previous subsection we have shown how to reconstruct enough words.
But now the next question is ``how do we manage to not reconstruct too many
words''? By this we mean, how do we make sure that observations
which do not correspond to words of $\xi_{\mathcal{K}(n^2)}$
do not get selected by the first phase of our algorithm?
This means that we have to be able to eliminate observations
which do not correspond to  a word of $\xi_{\mathcal{K}(n^2)}$!
The best is again to see an example. 
\begin{example}
Take for this
\begin{equation}
\label{thescenery3}
\xi_{[0,16]\times[0,4]}=
\begin{array}{c|c|c|c|c|c|c|c|c|c|c|c|c|c|c|c|c}
2&1&9&{\color{blue}4}&3&7&4&1&2&5&2&{\color{red}2}&7&8&0&6&9\\\hline
7&5&{\color{blue}0}&{\color{blue}7}&{\color{blue}6}&1&1&8&2&5&{\color{red}8}&{\color{red}6}&{\color{red}7}&4&0&4&2\\\hline
7&{\color{green!50!black}4}&{\color{green!50!black}3}&{\color{green!50!black}9}&{\color{green!50!black}1}&
{\color{green!50!black}2}&1&7&8&{\color{brown}4}&{\color{brown}7}&{\color{brown}6}&{\color{brown}1}&{\color{brown}7}&7&
7&4\\\hline
8&6&{\color{blue}4}&{\color{blue}4}&{\color{blue}0}&4&3&5&3&6&{\color{red}7}&{\color{red}5}&{\color{red}1}&9&9&9&1\\\hline
2&2&7&{\color{blue}8}&0&3&9&4&3&7&2&{\color{red}1}&9&4&5&7&0
\end{array}
 \end{equation}
Observe this is the same piece of scenery as was shown in (\ref{thescenery2}), but  the brown word was moved two units to the left.
So, let again $w_1$ denote the green word
$${\color{green!50!black}w_1=43912}$$
and let this time $w_3$ be the ``new'' brown word:
$${\color{brown} w_3=47617}$$
Now a particle in between following the green word and then the brown word
could for example do the following little dance step:
$$right,up,right,down, right,right,$$
and then produce
the observation string $w_2$ given
by
$$w_2=11878.$$
How can we make sure the observation string $w_2$ which is not a word (it does not follow a straight path) of our piece of scenery, does not get selected by the first phase of our algorithm as a short word?
To see how $w_2$ gets eliminated in the first phase of our algorithm consider the following
dancing step:
$$right,down,right,up,right,right.$$
When doing this nearest neighbor path, a particle would 
produce the observations $\bar{w}_2$ where
$$\bar{w}_2=13578$$ 

Let $B^n_5$ be the event that up to time $n^4$ every nearest neighbor path
of length $3(\ln n)^2$ in $\mathcal{K}(n^2)$ gets followed at least once.
Then, assuming our piece of scenery
 (\ref{thescenery3}) is contained in $\mathcal{K}(n^2)$, we would have that:
Up to time $n^4$ we will observe
both strings
$${\color{green!50!black} w_1}w_2{\color{brown}w_3}  \quad\quad\text{ and }\quad\quad {\color{green!50!black} w_1}\bar{w}_2{\color{brown}w_3},$$
at least once. 
Since $w_2\neq\bar{w}_2$ the second short-word-selection
criteria of the first phase assures that the words $w_2$ and $\bar{w}_2$ do not get selected
in the first phase of $Lambda_n$. The crucial thing
here was that from the point $(5,2)$ to $(8,2)$
there were two different $6$ step nearest neighbor paths
generating different observations, $w_2$ and $\bar{w}_2$.
In  subsection \ref{s_1phase}, we will show that $whp$
in  the box $\mathcal{K}(n^2)$ for any pair of points $x$ and $y$
so that a nearest neighbor walk goes in $(\ln n)^2-1$ steps from 
$x$ to $y$, either one of the two following things hold:
\begin{enumerate}
\item The segment $\bar{xy}$ is parallel to a direction of a coordinate
and the distance $|x-y|=(\ln n)^2-1$, or 
\item there exist two different nearest neighbor walk paths
going from $x$ to $y$ in $(\ln n)^2-1$ steps and generating to
different observation-strings.
\end{enumerate}
\end{example}
So, this  implies that $whp$, the first phase of our algorithm
can eliminate all the strings as it does in the example with $w_2$ and $\bar{w}_2$  which are not words
of $\xi_{\mathcal{K}(n^2)}$.

\subsubsection{Why we need a seed}
\label{seed}
The second phase produces a bag of long words which $whp$
contains all long words of $\xi_{\mathcal{K}(4n)}$. Unfortunately, this bag
is likely to contain also some long words of $\xi_{\mathcal{K}(n^2)}$
which are ``not close to the origin''. 

Note that the size of the long words is $4n$, so if such a long word appears close to the border
of $\mathcal{K}(n^2)$, then it wouldn't serve to our reconstruction
purpose.
The reason is that the algorithm $\varLambda^n$
aims to reconstruct the scenery in a box close to the origin.
That is why in the third phase of $\varLambda^n$ we apply the first two 
phases  but with the parameter $n$ being replaced
by $n^{0.25}$. We then take any long word $w_0$ from the bag
of words produced by the second phase of the algorithm
$\varLambda^{n^{0.25}}$. That long word $w_0$ of size $4n^{0.25}$
is then $whp$ contained in  $\xi_{\mathcal{K}((n^{0.25})^2)}=\xi_{\mathcal{K}(n^{0.5})}.$

In other words, the long word $w_0$ chosen as seed in the third phase of the 
algorithm is $whp$ not further away than $\sqrt{n}$
from the origin. Since it is likely that any word of that size
appears only once in $\mathcal{K}(n^2)$, then  we can use this to  determine
one long word $w_L$ from the bag created by $\varLambda^n$ 
which is not further from the origin than $ \sqrt{n}$. We simply chose $w_L$ to be any word from the bag of long
words created by $\varLambda^n$ which contains $w_0$.
Finally in the fourth phase of the algorithms we will then add neighboring 
long words to that first chosen long word. If the one long word
which gets chosen in the third phase is close to the origin,
we can determine which long words are neighbor on each other,
then this will ensure that the other long words used for the reconstruction
in the fourth phase are also close to the origin.

\subsubsection{Finding which long words are neighbors in the 4th phase of 
the algorithm $\Lambda^n$.}\label{Lamblamb}

The fourth phase of the algorithm is then concerned with finding
the relative position of the longer reconstructed words to each other.
More precisely it tries to determine which long words are neighbors
of each other. For the exact definition of neighboring long words
check out subsection \ref{s_4phase}. Let us explain it through another example.

\begin{example}
Consider the piece of scenery $\xi_{[0,16]\times[0,4]}$ given
in (\ref{thescenery3}) and  let us designate by $v_a$ the green word, by $v_c$ the brown word and by $v_b$ the   word between $v_a$ and $v_c$, i.e., 
$$v_a=\xi_{(1,2)}\xi_{(2,2)}\xi_{(3,2)}\xi_{(4,2)}\xi_{(5,2)}=43912,$$
$$v_c=\xi_{(9,2)}\xi_{(10,2)}\xi_{(11,2)}\xi_{(12,2)}\xi_{(13,2)}=47617,$$
and 
$$v_b=\xi_{(6,2)}\xi_{(7,2)}\xi_{(8,2)}=178.$$
Finally let $w_b$ designate the word ``one line higher'' than $v_b$, so 
$$w_b:=\xi_{(5,3)}\xi_{(6,3)}\xi_{(7,3)}\xi_{(8,3)}\xi_{(9,3)}=11825.$$

Note  that $w_b$ has two digits more than $v_b$, and the middle letter of $w_b$ has the same $x$-coordinate than the last  letter of  $v_b$ in (\ref{thescenery3}). Furthermore,  in the piece of scenery (\ref{thescenery3}) we designate the third line
by $v$ so that 
$$v:=\xi_{(0,2)}\xi_{(1,2)}\xi_{(2,2)}\ldots\xi_{(16,2)}=74391217847617774,$$
and by $w$ the long word written one line above, i.e.,
$$w:=\xi_{(0,3)}\xi_{(1,3)}\xi_{(2,3)}\ldots\xi_{(16,3)}=75076118258674042.$$
Assume that the two words 
$v$ and $w$ have already been reconstructed by the two first phases
of our algorithm $\Lambda^n$.  Consider next the straight path $R_1$:
\begin{eqnarray*}
& & (1,2),(2,2),(3,2),(4,2),(5,2),(6,2),(7,2),\\
& & (8,2),(9,2),(10,2),(11,2),(12,2),
(13,2),
\end{eqnarray*}
so a particle following this straight path generates the observations $\xi\circ R_1=v_av_bv_c.$

Consider now a second path $R_2$ that is similar to $R_1$ but at the end of the word $v_a$, it goes one step up to read $w_b$ and then, at the end of $w_b$ it goes one step down to
read $v_c$. So, the path
$R_2$ is defined as follows:
\begin{eqnarray*}
& &  (1,2),(2,2),(3,2),(4,2),(5,2),
(5,3),(6,3),(7,3),\\
& & (8,3),(9,3),(9,2),(10,2),(11,2),(12,2),(13,2),
\end{eqnarray*}
and generates the observations $\xi\circ R_2=v_aw_b v_c.$

If now up to time $n^4$ there is at least one particle following
the path $R_1$ and another particle following $R_2$,
then in the observations before time $n^4$, we will
observe once $v_av_bv_c$ and also $v_aw_bv_c$.
So, $v_a$, $v_b$, $v_c$ and $w_b$ pass all the criteria in 
the fourth phase of our algorithm together with the long words
$v$ and $w$. So, $v$ and $w$ are detected (correctly)
as being neighboring long words.  Again, for this to happen we only
need particles to go on the path $R_1$ and $R_2$.
\end{example}

So, what the previous example tell us is that to recognize correctly which words in 
$LONGWORDS^n$ are  neighbors in the fourth phase of our algorithm, we need first  to guarantee  that  all nearest neighbor paths of length 
$3(\ln n)^2$ within the box $\mathcal{K}(n^2)$, being followed by at least
one particle up to time $n^4$. The event $B^n_3$ guaranties this
and we prove it holds $whp$ in the first subsection of the next
section.

On the other side, we still need  the forth phase of $\Lambda^n$  not to classify pairs of long words as neighbors if they are not  in the scenery. This problem of  ``false positives'' is solved as soon as the previously defined diamond property holds, 
that is when the event $B^n_3$ holds and also short words
do not appear in different places in $\xi_{\mathcal{K}(n^2)}$.
The proof of this is a little intricate and is given
as proof of Lemma \ref{4phaseworks}
in the next section. 

\subsubsection{Placing neighboring long words next to each other in 
the $4$-th phase of our algorithm.}\label{library}
Here we show  how the long words are located next to each other in
the fourth phase, in order to reconstruct a piece of the scenery
$\xi$. 
\begin{example}
Let us assume that the scenery is $2$-dimensional and that the seed word is $w_0=012$. To start we place $w_0$ at the origin,
and it would be  the first part of the
 ``reconstruction done by the algorithm $\Lambda^n$'',  thus $w_0$ is  the restriction of $\Lambda^n(\chi_{n^4})$ to
the set $\{(-1,0),(0,0),(1,0)\}$.

Say then that we find a long word $w_L$ which is $60123$. This long word
contains the seed word $012$ centered in its middle, so, if we  superpose  $w_L$ over $w_0$ so that it matches we   get again $60123$,
that  corresponds to the restriction of 
$\Lambda^n(\chi_{n^4})$ to
the set $\{(-2,0),(-1,0),(0,0),(1,0),(2,0)\}$.

Next we find that the two long words $01111$ and $02222$
of $LONGWORDS^n$ are neighbors of $w_L$. In that case,
we place these two long words in a neighboring position to
get the following piece of scenery:
$$
\begin{array}{ccccc}
01111\\
60123\\
02222,\\
\end{array}
$$
and  assume that in our bag $LONGWORDS^n$ of long words
we find the word $03333$ to be a neighbor of $01111$ and $04444$ to be neighbor
of $02222$. Then our reconstruction yields the following piece
of scenery:
\begin{equation}
\label{finalfinal}
\begin{array}{ccccc}
03333\\
01111\\
60123\\
02222\\
04444
\end{array}
\end{equation}
This is then the ``end product'' produced by the algorithm $\Lambda^n$.
So, this final output of the algorithm  
is  a piece of scenery on a box of $5\times 5$ given in (\ref{finalfinal}).
We assumed the size of the long words to be $5$.
\end{example}

\subsubsection{Overview over the events which make the algorithm 
$\varLambda^n$
work.}
A handful of events ensure that the algorithm $\varLambda^n$
works as already mentioned. These events will all be defined
again in a more exhaustive way in the next section. But, here let us list them 
in a somewhat informal way. 
The {\bf first phase} works through the following events: 
\begin{itemize}
\item{}The event
$$B^n_2$$
guaranties that up to time $n^2$ every nearest neighbor path of length
$3(\ln n)^2$ contained in $\mathcal{K}(4n)$ gets followed at least once
by at least one particle.
\item{}The event
$$B^n_3$$ asserts that up to time $n^4$ for every word $w$ of $\xi_{\mathcal{K}(n^2)}$,
any nearest neighbor walk path $R$ in $\mathcal{K}(n^4)$ generating
$w$ must start and end in the diamond associated with $w$. This event
is needed to guaranty that all the  words of $\xi_{\mathcal{K}(4n)}$
of length $(\ln n)^2$
get selected in the first phase of the algorithm, and put into the 
bag of words $SHORTWORDS^n$.
\item{}The event
$$B^n_4$$ states that for any two points $x$ and $y$ in $\mathcal{K}(n^2)$
so that there is a nearest neighbor path going in $(\ln n)^2-1$ steps
from $x$ to $y$, either one of the following  holds:
\begin{itemize}
\item{}The points $x$ and $y$ are at distance $(\ln n)^2-1$ from
each other and the segment $\bar{xy}$ is parallel to a direction
of coordinate. In that case, there can  only be one nearest neighbor walk
in $(\ln n)^2-1$ steps from $x$ to $y$, and that nearest neighbor walk
goes ``straight''. 
\item{}There exist two different nearest neighbor walk paths going
from $x$ to $y$ in exactly $(\ln n)^2-1$ steps and generating
different observations.
\end{itemize}
\item{}The event
$$B^n_5$$
guaranties that up to time $n^4$ every nearest neighbor path of length
$3(\ln n)^2-1$ contained in $\mathcal{K}(n^2)$ gets followed at least once. Furthermore, 
together with the event $B^n_4$,  these  make sure that observations
which are not words of $\xi_{\mathcal{K}(n^2)}$ get eliminated in the first
phase of $\varLambda^n$.
\end{itemize}
In Lemma \ref{B}, the combinatorics of the {\bf first phase} is proven.
It is shown that when  all the events $B^n_2,B^n_3,B^n_4, B^n_5$
hold, then the first phase works. In the first subsection
of the next section it is also proven that all these events
hold $whp$, which implies that the first phase works $whp$.

For the {\bf second phase} of the algorithm, we only need that 
 any  word of $\xi_{\mathcal{K}(n^2)}$ of size $(\ln n)^2-1$ 
appears in only one place in $\mathcal{K}(n^2)$.  That is given by the  event $C^n_1$ and needed to assemble short words into longer words. 

The {\bf third phase} of the algorithm is just using the first two phases of the algorithm
but with a parameter different from $n$.  Instead the parameter is $n^{0.25}$.
So, we don't need any other special event to make this phase work.
We only need what we have proven for the first two phases of the algorithm.

Finally, the {\bf forth phase} of the algorithm  needs the diamond property to hold for the short words in $\xi_{\mathcal{K}(n^2)}$,
that is the event $B^n_3$ to hold.  Furthermore, the event $C^n_1$ which guaranties that short words can not appear in two different places of $\xi_{\mathcal{K}(n^2)}$ is also needed. These events are defined already for the first two phases of the algorithm.

The next section gives all the detailed definitions of these events, the proofs for their high probability, and also  the rigorous proofs that these events make the different phases
of the algorithm work. Although  most of the  main ideas are already given 
in the present section, and it might seem a little redundant, we feel
that presenting them once informally but with all the details 
in a rigorous manner,  will be very useful to understand better the algorithm. 

\section{Proof of  Theorem \ref{n-theorem}}
In what follows we will 
we say that the Branching random walk BRW
visits $z\in\Z^d$ at time $t$ if $\eta_t(z)\geq 1$.


\subsection{First phase}
\label{s_1phase}
In this phase we will construct  the set of $SHORTWORDS^n$.
Recall that  a string $w_2$ of size $(\ln n)^2$ is in
$SHORTWORDS^n$ if there exist two sequences
$w_1$ and $w_3$ both of size $(\ln n)^2$ and such that:
\begin{enumerate}
\item{} $w_1w_2w_3$ appears in the observations before time
$n^2$. 
\item{} The only string $w$ of size $(\ln n)^2$ such that
$w_1ww_3$ appears in the observations up to time $n^4$ is $w_2$.
\end{enumerate}
Let $W(\xi_{\mathcal{K}(4n)})$ and $W(\xi_{\mathcal{K}(n^2)})$ be
the sets of all  words of size $(\ln n)^2$ in
$\xi_{\mathcal{K}(4n)}$ and $\xi_{\mathcal{K}(n^2)}$ respectively,
then we are going to show that with high probability
 the set   $SHORTWORDS^n$ satisfies that 
$$W(\xi_{\mathcal{K}(4n)})\subseteq SHORTWORDS^n\subseteq 
W(\xi_{\mathcal{K}(n^2)}).$$
We need the following results:
\begin{lemma}\label{B1}
Let $B^n_1$ be the event that up to time $n^2$ all the sites 
in ${\mathcal{K}(4n)}$ are visited by the BRW more than
$\exp(cn)$ times, where $c$ is a constant independent of $n$, then
there exists $C>0$ such that
$$P(B^n_1)\geq 1- e^{-Cn^2}.$$
\end{lemma}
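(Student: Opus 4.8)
The plan is to prove this via a two-step argument: first controlling the number of particles (the Galton--Watson population), then showing each site is visited many times conditionally on a large population.

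\textbf{Step 1: Population growth.} The total number of particles $N_n$ at time $n$ is a supercritical Galton--Watson process with mean offspring $m = 1+b > 1$, so $m^{-n} N_n$ is a nonnegative martingale converging to a nondegenerate limit $W$. What I need is a quantitative lower bound: there is a constant $c_1 > 0$ such that $P(N_{n^2/2} \le e^{c_1 n^2}) \le e^{-C n^2}$ for large $n$. Since $m^{n^2/2} = e^{(n^2/2)\ln m}$, this says that the population at time $n^2/2$ is at least a fixed exponential, except on an event of doubly-exponentially small probability. This kind of bound follows from standard large-deviation estimates for supercritical Galton--Watson processes (for bounded offspring, the lower deviation $P(N_k \le e^{\lambda k})$ for $\lambda < \ln m$ decays at least geometrically in $N_{k_0}$ for any fixed $k_0$, hence one can bootstrap to get the doubly-exponential rate; alternatively, partition $[0,n^2/2]$ into $n$ blocks of length $n/2$ and note that in each block, with probability bounded away from zero, the population at least doubles, and these are ``almost independent'' conditioned on the filtration — a union/Chernoff bound over the $n$ blocks then gives the $e^{-Cn^2}$ rate).

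\textbf{Step 2: Covering and revisiting.} Condition on the event $\{N_{n^2/2} \ge e^{c_1 n^2}\}$. Fix a site $z \in \mathcal{K}(4n)$. Each of the $\ge e^{c_1 n^2}$ particles alive at time $n^2/2$ performs, over the remaining $n^2/2$ steps, an independent simple random walk (given the genealogy). A single simple random walk started from any point within distance $O(n)$ of $z$ hits $z$ at some prescribed time of the same parity within $n^2/2$ steps with probability at least $e^{-c_2 n}$ by the local central limit theorem (or by an explicit ballot-type estimate: force the walk to march toward $z$ and then stay put/oscillate). More carefully, I can lower-bound the number of visits to $z$ in the time interval $[n^2/2, n^2]$ by a single particle by a quantity whose expectation is at least, say, $e^{-c_2 n}$, and which is a nonnegative random variable. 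Then the total number of visits to $z$ up to time $n^2$, summed over the $\ge e^{c_1 n^2}$ independent particle-trajectories, stochastically dominates a sum of $e^{c_1 n^2}$ i.i.d. nonnegative random variables each with mean $\ge e^{-c_2 n}$; choosing $c_1$ large enough that $c_1 n^2 - c_2 n \to \infty$ dominates, a Chernoff bound shows this sum is $\ge e^{cn}$ (with $c$ between $0$ and $c_1 n^2 - c_2 n$ scaled appropriately, e.g. $c = c_1 n/2$... actually one takes $c$ so that $e^{cn} \ll e^{c_1 n^2} \cdot e^{-c_2 n}$) except with probability $\le e^{-C' e^{cn}} \le e^{-Cn^2}$. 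A union bound over the $O(n^d)$ sites $z \in \mathcal{K}(4n)$ is absorbed into the constant since $n^d \cdot e^{-Cn^2} \le e^{-C'n^2}$ for large $n$.

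\textbf{Main obstacle.} The delicate point is the parity/timing issue in Step 2: a simple random walk on $\Z^d$ can only be at $z$ at times of a fixed parity, and the particles alive at time $n^2/2$ sit at various positions with various parities. This is handled by noting that branching steps do not move particles, so parity is not an obstruction if we are slightly flexible — or more simply, by lower-bounding visits over a window of times (the interval $[n^2/2, n^2]$ contains $n^2/2$ times, plenty of each parity) rather than at a single time, and by using that a positive fraction of the $e^{c_1 n^2}$ particles are within distance $O(n)$ of $z$ with the right parity (which follows because the whole population is confined to $\mathcal{K}(n^2/2) \supseteq \mathcal{K}(4n)$, and we only need the crude fact that at least one — in fact exponentially many — particles have each given parity, e.g. by tracking a single lineage that branches). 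I would also need the elementary fact that a simple random walk confined near the origin revisits a nearby point order-$e^{-O(n)}$ often in expectation over $n^2/2$ steps, which is a routine local-CLT computation. The rest is bookkeeping with Chernoff bounds, and the constants $c$ and $C$ can be chosen at the end to make everything consistent.
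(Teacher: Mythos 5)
Your overall strategy is the same as the paper's (sketched) proof: first show that by time $n^2/2$ the population is at least $e^{\delta n^2}$ except on an event of probability $e^{-Cn^2}$, then give each of those particles an (essentially independent) chance of at least $e^{-c_2 n}$ — the paper uses the sharper $c\,n^{-(d-2)}$, via the hitting probability of the particle's offspring cloud — to visit a fixed $z\in\mathcal{K}(4n)$ during $[n^2/2,n^2]$, and finish with a Chernoff bound plus a union bound over the $O(n^d)$ sites. Your Step 1, as a statement, is exactly what the paper asserts and is true (the cheapest way for $N_t$ to stay below $e^{\lambda t}$ with $\lambda<\ln(1+b)$ is essentially to refrain from branching for a positive fraction of the time, which costs $(1-b)^{\Theta(t)}=e^{-\Theta(n^2)}$); your ``doubling per block of length $n/2$'' heuristic is garbled as written (doubling $n$ times only yields $2^n$ particles, and the early blocks fail with constant probability), but the bound you import is standard.

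The genuine gap is in Step 2. The per-particle bound $e^{-c_2 n}$ is only valid for particles located within distance $O(n)$ of $z$ at time $n^2/2$, and your justification that a positive fraction of the particles are within distance $O(n)$ of $z$ ``because the whole population is confined to $\mathcal{K}(n^2/2)$'' is a non sequitur: confinement to a box of radius $n^2/2$ says nothing about proximity at scale $n$. Nor can you get it from the marginal law of one particle (which is indeed within $O(n)$ of the origin with probability bounded below) together with a law of large numbers, because the time-$n^2/2$ positions of distinct particles are strongly dependent through shared ancestry; the independence you invoke holds only for the increments after time $n^2/2$, conditionally on those dependent positions. So the step fails as written and needs an actual argument that, with probability $1-e^{-Cn^2}$, there are $e^{\delta' n^2}$ particles inside, say, $\mathcal{K}(n)$ at time $n^2/2$ — for instance by running your Step-1 growth argument for the branching walk killed upon exiting $\mathcal{K}(n)$ (still supercritical, since the confinement penalty on the walk is only $1-O(n^{-2})$ per step while the branching factor is $1+b$), or by a second-moment estimate. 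The paper's two-line sketch glosses the same point (its per-particle probability $c\,n^{-(d-2)}$ also presupposes the particle starts at distance $O(n)$ from $x$), but since you are supplying a proof, this is the missing piece; the parity issue you single out is minor by comparison.
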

\begin{proof}
We only sketch the proof:
 it is elementary to obtain that by time $n^2/2$ the
process will contain at least $e^{\delta n^2}$ particles (where
$\delta$ is small enough), with probability at least $1-
e^{-Cn^2}$. Then, consider any fixed $x\in{\mathcal{K}}$; for
each of those particles, at least one offspring will visit~$x$
with probability at least $cn^{-(d-2)}$, and this implies the
claim of Lemma~\ref{B1}.
\end{proof}

\begin{lemma}\label{B2}
Let $B^n_2$ be the event that up to time $n^2$ for every nearest
neighbor
path of length $3(\ln n)^2-1$ contained in 
${\mathcal{K}(4n)}$, there is at least one particle which follows
that path.  Then, for all $n$ large enough we have:
$$P(B^n_2)\geq 1- \exp[-c_1n],$$
where $c_1>0$ is constant independent of $n$.
\end{lemma}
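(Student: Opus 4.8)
The plan is to fix one path at a time, show it is followed with overwhelming probability, and finish with a union bound. So fix a nearest-neighbour path $R$ of length $L:=3(\ln n)^2-1$ (that is, $R$ makes $L$ steps among $L+1$ sites) with image contained in $\mathcal{K}(4n)$. The number of such paths is at most $(8n+1)^d(2d)^L$, since $R$ is determined by its starting site in $\mathcal{K}(4n)$ together with its sequence of $L$ unit steps; note that $\ln\big((8n+1)^d(2d)^L\big)=O((\ln n)^2)$. First I would produce, whp, many particles sitting on the starting site $R(0)$ at one common time, early enough to leave room to walk along $R$: running the argument that proves Lemma~\ref{B1} with the horizon $n^2$ replaced by $\lfloor n^2/2\rfloor$ yields an event $\widetilde B^n_1$ with $P(\widetilde B^n_1)\ge 1-e^{-Cn^2}$ on which every site of $\mathcal{K}(4n)$, in particular $R(0)$, is visited more than $e^{cn}$ times before time $\lfloor n^2/2\rfloor$. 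Since only $\lfloor n^2/2\rfloor$ time steps are involved, the pigeonhole principle gives, on $\widetilde B^n_1$, a (random) time $t^\ast\le\lfloor n^2/2\rfloor\le n^2-L$ with $\eta_{t^\ast}(R(0))\ge e^{cn}/n^2\ge e^{c'n}$ for a constant $0<c'<c$ and all $n$ large.

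Next, each of these $\ge e^{c'n}$ particles has an independent chance to follow $R$. Let $\mathcal{F}_t$ denote the $\sigma$-field generated by the BRW up to time $t$. On $\widetilde B^n_1$ the event that $R$ is not followed before time $n^2$ is contained in
$$\bigcup_{t\le n^2-L}\big\{\eta_t(R(0))\ge e^{c'n},\ \text{no descendant lineage of a time-$t$ particle at $R(0)$ realizes the $L$ displacements of $R$ on $[t,t+L]$}\big\}.$$
Fix such a $t$. The event $\{\eta_t(R(0))\ge e^{c'n}\}$ is $\mathcal{F}_t$-measurable, and on it there are at least $e^{c'n}$ distinct particles at $R(0)$ whose future sub-branching-random-walks are conditionally independent given $\mathcal{F}_t$. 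For one such particle, choosing one descendant at each of the next $L$ steps produces a simple random walk, which matches the prescribed displacement sequence of $R$ with probability exactly $(2d)^{-L}$; hence that particle has a descendant following $R$ with probability at least $(2d)^{-L}$. Therefore, on $\{\eta_t(R(0))\ge e^{c'n}\}$ one has $P(\,\cdot\mid\mathcal{F}_t)\le(1-(2d)^{-L})^{e^{c'n}}\le\exp(-e^{c'n}(2d)^{-L})$ for the displayed bracket at level $t$, and summing over $t\le n^2-L$ and adding $P((\widetilde B^n_1)^c)$ gives
$$P(R\text{ not followed before time }n^2)\ \le\ e^{-Cn^2}+n^2\exp\!\big(-e^{c'n}(2d)^{-L}\big).$$

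A union bound over the $\le(8n+1)^d(2d)^L$ paths then yields $P((B^n_2)^c)\le(8n+1)^d(2d)^L\big(e^{-Cn^2}+n^2\exp(-e^{c'n}(2d)^{-L})\big)$. Since $L\ln(2d)=O((\ln n)^2)$ we have $(2d)^{\pm L}=e^{O((\ln n)^2)}$, so the prefactor $(8n+1)^d(2d)^L$ equals $e^{O((\ln n)^2)}$ while $e^{c'n}(2d)^{-L}\ge e^{c'n/2}$ for $n$ large; hence $P((B^n_2)^c)\le e^{O((\ln n)^2)}\big(e^{-Cn^2}+n^2 e^{-e^{c'n/2}}\big)$, which is comfortably $\le\exp(-c_1 n)$ for a suitable constant $c_1>0$ and all $n$ large, as claimed.

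I expect the only genuinely delicate point to be the bookkeeping in the second paragraph, not any estimate: the probabilistic cost $(2d)^{-L}=\exp(-O((\ln n)^2))$ of forcing a single lineage along a prescribed path is tiny compared with the roughly $e^{cn}$ trials that Lemma~\ref{B1} makes available — this is exactly the mechanism behind inequality~(\ref{frac}) — but one must be careful to turn the ``$e^{cn}$ visits to $R(0)$'' guaranteed by Lemma~\ref{B1} into $e^{c'n}$ distinct particles that are present at $R(0)$ \emph{at one common time} and have \emph{genuinely independent} future evolutions. The pigeonhole over the polynomially many time steps, combined with conditioning on $\mathcal{F}_t$ (so that particles sharing a location at time $t$ are automatically distinct with conditionally independent descendant trees), is what makes this step clean; everything else is routine.
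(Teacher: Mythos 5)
Your proposal is correct and follows essentially the same route as the paper's proof: Lemma~\ref{B1} supplies of order $e^{cn}$ trials at the starting site, each trial follows the prescribed path with probability $(2d)^{-(3(\ln n)^2-1)}$, and a union bound over the at most $(8n+1)^d(2d)^{3(\ln n)^2-1}$ paths in $\mathcal{K}(4n)$ finishes the estimate. Your pigeonhole to a common time combined with conditioning on $\mathcal{F}_t$ is in fact a cleaner justification of the independence that the paper simply asserts for its indicators $Y_{ij}$, and your direct $(1-(2d)^{-L})^{e^{c'n}}$ bound replaces the paper's Chebyshev inequality, yielding a bound of the same quality.
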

\begin{proof}
Let $R^z$ be a nearest neighbor path of length $3(\ln n)^2-1$ in 
$\mathcal{K}(4n)$ starting at  $z\in \mathcal{K}(4n)$. 
By Lemma~\ref{B1} we know that with high probability up to time $n^2$ all the
sites in ${\mathcal{K}(4n)}$ are visited by the BRW more than
$\exp(cn)$ times, where $c$ is a constant independent of $n$. 
Suppose  we have been observing the BRW up to time $n^2$, then
define for any $z\in \mathcal{K}(4n)$ the following variables;
$$
Y_{ij} = \left\{\begin{array}{cl} 1&\mbox{after the $i$-th
visit to $z$, 
the corresponding particle follows the path $R_j^z$ } \\
0&\mbox{otherwise,}\end{array}\right. 
$$
where $i=1,\dots,\exp(cn)$ and $j=1,\dots,(2d)^{3(\ln n)^2-1}$. 
 Note that the variables  $Y_{ij}$'s are independent because all
the particles are moving independently between themselves.
We are interested on the event 
\begin{eqnarray}
\label{event}
\Big\{\bigcap_{j=1}^{(2d)^{3(\ln n)^2-1}}
 \bigcup_{i=1}^{e^{cn}}\{Y_{ij}=1\}\Big\},
\end{eqnarray}
i.e, for every path of length $3(\ln n)^2-1$ starting at
 $z\in \mathcal{K}(4n)$, up to time $n^2$, there is at least one
visit to $z$, such that  the corresponding particle on $z$ follows it.

Let  $Z_j=\sum_i Y_{ij}$, thus $Z_j$ counts the number of times 
 $R_j$ is followed, and it is binomially distributed  with
expectation and variance given by
\begin{eqnarray*}
E[Z_j]&=&\exp(cn)\Big(\frac{1}{2d}\Big)^{3(\ln n)^2-1} \text{ and}
\\
V[Z_j]&=&\exp(cn)\Big(\frac{1}{2d}\Big)^{3(\ln n)^2-1} 
\Big(1-\Big(\frac{1}{2d}\Big)^{3(\ln n)^2-1}\Big).
\end{eqnarray*}
Observe that  (\ref{event}) is equivalent 
to $\Big\{\bigcap_{j=1}^{(2d)^{3(\ln n)^2-1}}\{Z_j\geq1\}\Big\}$,
then by Chebyshev's inequality we have
$$
P(Z_j\leq 0)\leq \frac{V[Z_j]}{E^2[Z_j]}<\exp[(3(\ln
n)^2-1)\ln2d-cn],
$$
and
\begin{eqnarray}
\label{B2c}
P\Big(\bigcup_{j=1}^{(2d)^{3(\ln n)^2-1}}\{Z_j\leq0\}\Big)
<\exp[(6(\ln n)^2-2)\ln2d-cn].
\end{eqnarray}
Since the number of sites in $\mathcal{K}(4n)$ is $(8n+1)^{d}$, 
by (\ref{B2c})  it follows that
\begin{equation}
\label{yesyes}
P(B^{nc}_2)<(8n+1)^{d}\exp[(6(\ln n)^2-2)\ln2d-cn].
\end{equation}
Now note that for any constant $0<c_1<c$,
the right side of  (\ref{yesyes}) 
is less than $\exp(-c_1 n)$, for $n$ large enough. Thus $P(B^{nc}_2)\rightarrow 0$ as $n\rightarrow\infty.$
\end{proof}

In what follows we will denote by $T_w$ the diamond 
associated with a word $w$ appearing in a certain place in
the scenery. For the definition of diamond associated 
with a word see (\ref{diamond}).

\begin{lemma}
\label{B3}
Let $B^n_3$ be the event that  for any word
$w$ of size $(\ln n)^2$  contained in
$\xi_{\mathcal{K}(n^2)}$,  every nearest neighbor walk path $R$
generating $w$ on $\xi_{\mathcal{K}(n^4)}$ must start and end  in the diamond
associated to $w$, i.e.,  $R(0)\in T_w$ and
$R((\ln n)^2-1)\in T_w$.  Then, 
$$
P(B^{n}_3)\geq 1-2^{2d+1} \exp\left(8d\ln(n+1)+(\ln n)^2\ln(2d/\kappa)\right),
$$
which goes to  $1$ as $n\rightarrow\infty$
because we have assumed $\kappa>2d$.
\end{lemma}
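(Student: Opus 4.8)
The plan is to bound the probability of the complement $B^{nc}_3$ by a union bound over all words $w$ of size $(\ln n)^2$ appearing in $\xi_{\mathcal{K}(n^2)}$ and over all possible starting points $z$ lying \emph{outside} the diamond $T_w$. For a fixed word $w$ occurring between points $x$ and $y$ with $\bar{xy}$ parallel to a canonical direction, and a fixed point $z\notin T_w$, I would first argue — exactly as in the informal discussion leading to \eqref{frac} — that for any fixed nearest neighbor path $R$ of length $(\ln n)^2-1$ starting at $z$ we have
\begin{equation*}
P\bigl(\xi\circ R = w\bigr) = \Bigl(\frac{1}{\kappa}\Bigr)^{(\ln n)^2}.
\end{equation*}
The point is that even though $R$ may intersect the straight segment $\bar{xy}$, the ``walker following $R$ never catches up with the walker going straight from $x$ to $y$'' because $R$ starts strictly outside the diamond; hence at each step $k$ the $k$-th letter of $w$, namely $\xi_{x+k\vec e}$, is read at a site not yet visited among $R(0),\dots,R(k-1)$, and an induction on $k$ using independence of the i.i.d.\ scenery gives the displayed identity. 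Here one uses the standing assumption that $|x-y|$ is odd so the diamond is well defined.

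Next, summing over the at most $(2d)^{(\ln n)^2-1}$ nearest neighbor paths of that length starting at $z$ gives
\begin{equation*}
P\bigl(\exists\, R \text{ starting at } z \text{ with } \xi\circ R = w\bigr) \le \Bigl(\frac{2d}{\kappa}\Bigr)^{(\ln n)^2-1},
\end{equation*}
which is \eqref{frac}. Then I would take a union bound over the choices of $z$ and $w$. The number of candidate sites $z$ in $\mathcal{K}(n^2)$ — actually we only need $z$ to be a possible path-start inside $\mathcal{K}(n^4)$, but since a path of length $(\ln n)^2-1$ reaching near $T_w\subset\mathcal{K}(n^2)$ must start polynomially close, it suffices to count sites in a box of polynomial size, which is at most $(2n^2+1)^d\le (n+1)^{8d}$ for $n$ large; this is where the factor $\exp(8d\ln(n+1))$ comes from. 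The number of words $w$ of size $(\ln n)^2$ in $\xi_{\mathcal{K}(n^2)}$ is likewise polynomial in $n$ and absorbed into the same factor, while the constant $2^{2d+1}$ accounts for the $2d$ canonical directions and the two endpoints. Combining,
\begin{equation*}
P(B^{nc}_3) \le 2^{2d+1}\exp\Bigl(8d\ln(n+1)\Bigr)\Bigl(\frac{2d}{\kappa}\Bigr)^{(\ln n)^2-1} = 2^{2d+1}\exp\Bigl(8d\ln(n+1) + (\ln n)^2\ln(2d/\kappa)\Bigr)\cdot\frac{\kappa}{2d},
\end{equation*}
and absorbing the harmless constant factor gives the stated bound; since $\kappa>2d$ the exponent $(\ln n)^2\ln(2d/\kappa)$ tends to $-\infty$ faster than $8d\ln(n+1)$ grows, so the bound tends to $1$.

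The main obstacle is the careful bookkeeping in the union bound together with making the independence argument fully rigorous: one must verify that for \emph{every} path $R$ starting outside $T_w$ — not just the ``obvious'' detours — the site $x+k\vec e$ is genuinely unvisited at step $k$, i.e.\ that $R(j)\neq x+k\vec e$ for all $j<k$. This is the geometric heart of the diamond definition: a path starting at distance more than $(|x-y|/2)-1$ from the midpoint $(x+y)/2$ cannot reach the $k$-th point of the straight segment within $k$ steps, because the straight walker is always at least as far ahead. I would phrase this as a short lemma about the relative speed of the two walkers and then feed it into the induction. The probabilistic estimates themselves are routine once that combinatorial fact is in hand.
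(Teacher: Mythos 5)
Your overall route is the same as the paper's: a union bound over words, over starting sites $z$ outside $T_w$, and over the at most $(2d)^{(\ln n)^2-1}$ nearest neighbor paths from $z$, with the per-path probability $\kappa^{-(\ln n)^2}$ supplied by a letter-by-letter independence induction; the bookkeeping points you raise (which polynomial box $z$ ranges over, the stray factor $\kappa/(2d)$) are harmless and at the same level of precision as the paper itself. The genuine problem lies in the auxiliary lemma you propose to make the induction rigorous. The statement ``a path starting outside the diamond cannot reach the $i$-th point of the straight segment within $i$ steps, because the straight walker is always at least as far ahead'' is false on the far half of the segment: if $w$ is read from $x$ to $y=x+((\ln n)^2-1)\vec{e}$ and $z=y+\vec{e}$, then $z\notin T_w$ but $z$ is at distance $1$ from $y$ and at distance $(\ln n)^2-i$ from $x+i\vec{e}$, so a path may join the segment from ahead or run backwards along it. For instance the path $R(i)=y+\vec{e}-i\vec{e}$ starts outside $T_w$ and generates $w$ exactly when $\xi_{x+((\ln n)^2-i)\vec{e}}=\xi_{x+i\vec{e}}$ for all $i$, an event of probability roughly $\kappa^{-(\ln n)^2/2}$, not $\kappa^{-(\ln n)^2}$. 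Your triangle-inequality argument only gives freshness of the letter $\xi_{x+i\vec{e}}$ for $i$ up to about half the word length, so the induction breaks on the second half.

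This is not merely cosmetic: once the worst-case per-path probability degrades to $\kappa^{-(\ln n)^2/2}$, the crude union bound over the $(2d)^{(\ln n)^2-1}$ paths per site converges only if $\kappa>(2d)^2$, which is not granted by the standing hypothesis $\kappa\ge 2d+1$. To be fair, the paper's own proof asserts in one line that $w_i$ is independent of $\xi(R(0)),\dots,\xi(R(i-1))$ and of the earlier letters, and it does not treat paths overlapping the word from the $y$-side either; you have reproduced the argument including its weakest point, but since you explicitly made the (false) relative-speed lemma the heart of your plan, the proposal as written has a gap at exactly the step that matters. A repair must either handle separately the paths that track the segment (classifying paths by the number of fresh letters and trading their larger probability against their much smaller number), or exploit the diamond/endpoint geometry more carefully; the uniform per-path bound $\kappa^{-(\ln n)^2}$ for every $z\notin T_w$ simply does not hold.
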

\begin{proof}
Take without loss of generality a sequence read  in a straight way 
from left to right, i.e.,
$$
w=\xi(x)\xi(x+\vec{e_1})\xi(x+2\vec{e_1})\ldots 
\xi(x+((\ln n)^2-1)\vec{e_1}),
$$
and let $R^z$ be a nearest neighbor walk (non-random) 
of length $(\ln n)^2-1$ star\-ting at $z$ with $z\in
\mathcal{K}(n^4)\setminus T_w$. Since a nearest neighbor path at
each unit of time only make steps of length one, then  it follows
that
$w_i$ is independent of
$\xi(R(0)),\xi(R(1)),\ldots,\xi(R(i-1))$,
as well as of $w_1,\ldots,w_{i-1}$. Hence,
\begin{equation}
\label{sumumu}
P[w=\xi(R(0))\xi(R(1))\ldots\xi(R((\ln n)^2-1))]=(1/ \kappa
)^{(\ln n)^2}
\end{equation}
(recall that~$\kappa $ is the number of colors). 

Let $P_z^n$ be the set of all nearest neighbor paths of length $(\ln n)^2-1$ starting at
$z$, and note that  $P_z^n$ contains no more than $(2d)^{(\ln n)^2-1}$ elements. For a fix $z\in\mathcal{K}(n^4)\setminus T_w$, let $B^n_{3,z}$ be the event that there is no nearest neighbor path $R^z$ of length $(\ln n)^2-1$ generating $w$. By (\ref{sumumu}),
 it follows that
\begin{eqnarray*}\label{P1}
P(B^{nc}_{3,z})&=&P[\exists R^z; \xi(R^z)=w]\\
&\leq&\sum_{R^z\in P_z^n} P(\xi(R^z)=w)\\
\label{P2}
&\leq&\frac{(2d)^{(\ln n)^2-1}}{\kappa
^{(\ln n)^2}}\\
&\leq&\left(\frac{2d}{\kappa}\right)^{(\ln n)^2}.
\end{eqnarray*} 
 
Now for any $z\in\mathcal{K}(n^4)\setminus T_w$, let $B^n_{3S}$ be the event that there is no nearest neighbor path $R^z$ of length $(\ln n)^2-1$  generating $w$. Hence,
\begin{eqnarray*}
\label{ecucu}P(B^{nc}_{3S})=P\Big(\bigcup_{z\in \mathcal{K}(n^4)
\setminus T_w}B^{nc}_{3,z}\Big)
&\leq& \sum_{z\in \mathcal{K}(4n)\setminus T_w}P(B^{nc}_{3,z})\\
&\leq& (2n^4+1)^{2d} \left(\frac{2d}{\kappa}\right)^{(\ln n)^2}\\
\end{eqnarray*}
Now by symmetry
$P(B^{nc}_3)\leq 2P(B^{nc}_{3S}),$
so that
\begin{eqnarray*}
P(B^{nc}_3)&\leq& 2(2n^4+1)^{2d} \left(\frac{2d}{\kappa}\right)^{(\ln n)^2}\\
&\leq& 2(2(n+1)^4)^{2d} \left(\frac{2d}{\kappa}\right)^{(\ln n)^2}\\
&=&\left(8d\ln(n+1)+(\ln n)^2\ln(2d/\kappa)\right).
\end{eqnarray*}
%
\end{proof}

\begin{lemma}\label{B4}
Let $B_4^n$ be the event that for every two points
$x,y\in\mathcal{K}(n^2)$ and such that there is a nearest neighbor
path $R$ of length $(\ln n)^2-1$ going from $x$ to $y$, 
one of the following alternatives 
 holds:
\begin{itemize}
 \item[(a)] $x$ and $y$ are at distance $(\ln n)^2-1$ and on the 
same line, that means, along the same direction of a coordinate, or
 \item[(b)] there exists two different nearest neighbor
 paths $R_1$ and $R_2$
of length $(\ln n)^2-1$ both going from $x$ to $y$ but 
 generating different observations,
i.e., $\xi(R_1)\neq \xi(R_2)$.
\end{itemize}
Then, we have 
$$
P(B_n^4)\geq 1-\exp[- 0.5(\ln n)^2\ln
\kappa ].
$$
\end{lemma}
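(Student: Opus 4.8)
The plan is to bound the probability of the complement $B_4^{nc}$: there exist two points $x,y\in\mathcal{K}(n^2)$ joined by a nearest neighbor path of length $(\ln n)^2-1$ for which neither (a) nor (b) holds, i.e.\ $x,y$ are not "straight and at maximal distance," yet every pair of length-$((\ln n)^2-1)$ nearest neighbor paths from $x$ to $y$ generates the \emph{same} color string. First I would fix such a pair $x,y$ and a nearest neighbor path $R_1$ from $x$ to $y$ of the given length. Since (a) fails, the path $R_1$ is not a straight segment along a coordinate direction; hence there is at least one time step at which $R_1$ "wiggles," and from this one can produce a second path $R_2$ from $x$ to $y$ of the same length with $R_2\ne R_1$ — for instance by locating a step where $R_1$ moves in direction $\vec e$ and then later (not necessarily immediately) compensates, and swapping the order of two consecutive non-collinear moves, or by finding a $2$-step detour $z\to z+\vec e\to z$ that can be rerouted as $z\to z+\vec f\to z$. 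The key geometric claim is: \emph{whenever $x,y$ admit a non-straight connecting path of length $(\ln n)^2-1$, they admit two such paths that differ in at least one visited site} — in fact one can arrange that $R_2$ visits some site $R_2(i)$ that $R_1$ never visits, so the $i$-th observed color along $R_2$ is independent of all colors observed along $R_1$.

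Next, conditioning on the pair $(R_1,R_2)$ being fixed nearest neighbor paths, I would estimate $P(\xi(R_1)=\xi(R_2))$. As in the diamond computation (equation (\ref{sumumu}) and the discussion following (\ref{Prob2})), the point is to run through the positions of $R_2$ in order and observe that at the step where $R_2$ first reaches a site not visited by $R_1$ (and more generally at each coordinate where we can exhibit a "fresh" site), the corresponding color is independent of everything seen so far along $R_1$ and along the earlier part of $R_2$. Even if the two paths overlap heavily, each such fresh coordinate contributes an independent factor $1/\kappa$. With at least $\Omega((\ln n)^2)$ — in fact, essentially all $(\ln n)^2$ minus a bounded correction — such coordinates, one gets $P(\xi(R_1)=\xi(R_2))\le \kappa^{-(\ln n)^2 + O(1)}$, which is the bound we want up to the union-bound losses.

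Finally I would take the union bound over all choices of $x\in\mathcal{K}(n^2)$ (at most $(2n^2+1)^d$ of them), over all $y$ reachable in $(\ln n)^2-1$ steps (at most $(2d)^{(\ln n)^2-1}$ endpoints, crudely), and over the relevant pair of paths $(R_1,R_2)$ (again at most $(2d)^{2((\ln n)^2-1)}$ choices). Since $\kappa\ge 2d+1>2d$, the dominant factor $(2d/\kappa)^{(\ln n)^2}$ — more precisely $\kappa^{-(\ln n)^2}$ against the $(2d)^{O((\ln n)^2)}$ and $n^{O(1)}$ combinatorial factors — still yields a bound of the form $\exp[-c(\ln n)^2]$; choosing constants carefully gives $P(B_4^{nc})\le \exp[-0.5(\ln n)^2\ln\kappa]$ as claimed, for $n$ large. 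The main obstacle I anticipate is the purely combinatorial step of the first paragraph: verifying cleanly that a non-straight connecting path of the prescribed length always has a "sibling" path hitting a fresh site, and bookkeeping exactly how many independent $1/\kappa$ factors survive (this is what forces the precise constant $0.5\ln\kappa$ rather than $\ln(\kappa/2d)$), since near-straight paths that deviate only slightly are the tight case and must be handled so that the number of fresh coordinates is still $(\ln n)^2 - O(1)$.
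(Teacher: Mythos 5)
Your overall strategy (bound the complement, exhibit two paths from $x$ to $y$, estimate the probability that they read the same colors, then union bound) has the same skeleton as the paper's proof, but two steps do not hold up. First, the estimate $P(\xi\circ R_1=\xi\circ R_2)\le \kappa^{-(\ln n)^2+O(1)}$ is not justified by the construction you describe. A coordinate $i$ contributes a factor $1/\kappa$ only when $R_1(i)\ne R_2(i)$ (and the comparison must involve a site not already used); at every coordinate where $R_1(i)=R_2(i)$ the equality of colors is automatic. Your $R_2$ is obtained from $R_1$ by a local modification (swapping two non-collinear steps, rerouting a two-step detour), which changes the path at only $O(1)$ sites, so this construction yields only $\kappa^{-O(1)}$ --- far too weak to survive any union bound over $x,y\in\mathcal{K}(n^2)$. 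What is needed, and what the paper constructs explicitly for each pair $(x,y)$, is a pair of paths that differ at essentially \emph{all} interior times: if $|x-y|=(\ln n)^2-1$ but $x,y$ are not on a coordinate line, take two ``corner'' geodesics whose interiors are disjoint; if $|x-y|<(\ln n)^2-1$, let one path spend its slack in a loop at the start and the other in a loop near the far end, so that corresponding positions never coincide. Either way one gets $(1/\kappa)^{(\ln n)^2-2}$ for that single, deterministically chosen pair.

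Second, your union bound over all pairs of paths is both unnecessary and fatal to the claimed inequality. The failure of alternative (b) for a given $(x,y)$ means that \emph{every} pair of length-$((\ln n)^2-1)$ paths from $x$ to $y$ generates the same observations; hence it suffices to test one fixed pair $(R_1,R_2)$ determined by the geometry of $(x,y)$ alone, and no union over paths is required. The paper sums only over the at most $(2n^2+1)^d\,[2(\ln n)^2-3]^d$ pairs $(x,y)$, which is comfortably dominated by $\kappa^{-((\ln n)^2-2)}$ and gives $\exp[-0.5(\ln n)^2\ln\kappa]$. With your union over $(2d)^{2((\ln n)^2-1)}$ path pairs the bound becomes of order $\left((2d)^2/\kappa\right)^{(\ln n)^2}$, which does not even tend to zero when $\kappa=2d+1$ (one would need $\kappa>4d^2$); even a union over single paths would only give $\exp[-(\ln n)^2\ln(\kappa/2d)]$, weaker than the stated constant. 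The discrepancy you flag at the end of your proposal is resolved precisely by dropping the union over paths, not by finer bookkeeping of fresh coordinates.
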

\begin{proof} 
Let $x$ and $y$ be two points in  $\mathcal{K}(n^2)$ and such that
 there is a nearest neighbor path $R$ of length $(\ln n)^2-1$ going
from $x$ to $y$, so the distance between $x$ and $y$, $d(x,y)\leq
(\ln n)^2-1$. 

Suppose that  $d(x,y)= (\ln n)^2-1$, which is defined as 
 the length of the shortest path between $x$ and $y$.  
If $x$ and $y$ are not on the same  line,
then, 
there exists two paths $R_1$ and $R_2$ of length $(\ln n)^2-1$ both going from
$x$ to $y$ and not intersecting anywhere, except in $x$ and $y$. This means that $R_1(0)=R_2(0)=x$ and $R_1((\ln n)^2-1)=R_2((\ln n)^2-1)=y$,
but for all $j_1,j_2$ strictly between $0$ and $(\ln n)^2-1$,
we find $R_1(j_1)\neq R_2(j_2).$
Since the scenery $\xi$ is i.i.d, it thus follows that
\begin{eqnarray}\label{R1R2}
P[\xi(R_1(0))=\xi(R_2(0)),\dots,\xi(R_1((\ln n)^2-1))&=&\xi(R_2((\ln
n)^2-1))]\nonumber\\
&=&\Big(\frac{1}{\kappa }\Big)^{(\ln n)^2-2}.
\end{eqnarray}

Now suppose that  $d(x,y)<(\ln n)^2-1$.
Let $R_1$ be a path which makes a cycle from $x$ to $x$
 and then going in shortest time from $x$ to $y$, and $R_2$ be a
path which follow first a shortest path between $x$ and $y-1$, next
 makes a cycle from $y-1$ to $y-1$ and then go to $y$. If neither
the cycle from  $x$ to $x$  intersects the shortest path which makes
part of $R_2$ nor the cycle from $y-1$ to $y-1$ intersects the
shortest path which makes part of $R_1$, then we have that for
$i=0,\dots,(\ln n)^2-1$, the positions taken by $R_1$ and $R_2$
are different, i.e., $R_1(1)\neq R_2(1),\dots,R_1((\ln n)^2-2)\neq
R_2((\ln n)^2-2)$. Hence, $\xi(R_1(i))$ is independent of
$\xi(R_2(i))$ for $i=1,2,\ldots,(\ln n)^2-2$ and  (\ref{R1R2}) holds again.

Let $B_{4xy}^n$ be the event that there exist two nearest neighbor
paths~$S$ and~$T$ going from~$x$ to~$y$ with $d(x,y)\leq(\ln n)^2-1$, but generating
different observations, and let
$B^n_4=\bigcap_{x,y}B^n_{4xy},$
where the  intersection  is taken over
all $x,y\in \mathcal{K}(n^2)$ such that $d(x,y)\leq(\ln n)^2-1$.
By (\ref{R1R2}) it follows that 
\begin{equation}
\label{exp6}P(B_{4xy}^{nc})\leq \exp[- ((\ln n)^2-2)\ln \kappa ],
\end{equation}
then
\begin{eqnarray}
\label{susu}
P(B^{nc}_4)&\leq& \sum_{x,y}P(B^{nc}_{4xy})\nonumber\\
&<&[2(\ln n)^2-3]^d(2n^2+1)^d\exp[- ((\ln n)^2-2)\ln \kappa ]\nonumber\\
\label{Bn4}
&=&\exp[d\ln(2(\ln n)^2-3)+d\ln(2n^2+1)- ((\ln n)^2-2) \ln \kappa ],
\end{eqnarray}
and observe that  the right side of (\ref{Bn4}) is for all $n$ large enough
less than
$\exp(-0.5(\ln n)^2 \ln \kappa )$. This finishes this proof.  
\end{proof}

\begin{lemma}
\label{B5}
 Let $B^n_5$ be the event  that up to time $n^4$ every path of length
$3(\ln n)^2-1$ in $\mathcal{K}(n^2)$ gets followed at least once by a particle.
Then
$$P(B^n_5)\geq 1- \exp[-c_2n^2],$$
where $c_2>0$ is a constant not depending on $n$.
\end{lemma}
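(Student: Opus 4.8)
The plan is to imitate, on a larger scale, the two-step argument behind Lemmas~\ref{B1} and~\ref{B2}. First I would establish the analogue of Lemma~\ref{B1} with $n^2$ in the role of $n$ and $n^4$ in the role of $n^2$: for a suitable constant $c>0$, with probability at least $1-e^{-Cn^4}$, up to time $n^4$ every site of $\mathcal{K}(n^2)$ is visited by the BRW more than $\exp(cn^2)$ times. The argument would follow the sketch after Lemma~\ref{B1}: by a large-deviation estimate for the supercritical Galton--Watson process $(N_t)$, by time $n^4/2$ there are at least $e^{\delta n^4}$ particles with probability at least $1-e^{-Cn^4}$; fixing $x\in\mathcal{K}(n^2)$, the sub-trees of the BRW rooted at those particles evolve independently, and each places a descendant at $x$ by time $n^4$ with probability at least $p_n$ for some $p_n$ that is only polynomially small in $n$ (with an extra logarithm when $d=2$), since $x$ lies at distance $O(n^2)$ from the bulk and the remaining time $n^4/2$ is of the diffusive order $(n^2)^2$; hence the number of visits to $x$ stochastically dominates a $\mathrm{Binomial}(e^{\delta n^4},p_n)$ variable, whose mean is still exponentially large in $n^4$, and a Chernoff bound together with a union bound over the $(2n^2+1)^d$ sites of $\mathcal{K}(n^2)$ yields the claim with error of order $e^{-C'n^4}$.

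On this event I would then repeat almost verbatim the combinatorial part of the proof of Lemma~\ref{B2}. Fix $z\in\mathcal{K}(n^2)$ and list the nearest-neighbour paths $R^z_1,\dots,R^z_M$ of length $3(\ln n)^2-1$ starting at $z$, where $M=(2d)^{3(\ln n)^2-1}$. For $i=1,\dots,\exp(cn^2)$ and each $j$, let $Y_{ij}$ be the indicator that at its $i$-th visit to $z$ the corresponding particle follows $R^z_j$; the $Y_{ij}$ are independent with $P(Y_{ij}=1)=(2d)^{-(3(\ln n)^2-1)}$, so that $Z_j:=\sum_i Y_{ij}$ is binomial with mean $\mu:=\exp(cn^2)(2d)^{-(3(\ln n)^2-1)}$, and Chebyshev's inequality gives
\[
P(Z_j\le 0)\le\frac{V[Z_j]}{E^2[Z_j]}\le\frac1\mu=\exp\!\big[(3(\ln n)^2-1)\ln 2d-cn^2\big].
\]
A union bound over the $M$ paths and the $(2n^2+1)^d$ starting sites, together with the error from the first step, then gives
\[
P(B^{nc}_5)\le e^{-C'n^4}+(2n^2+1)^d\exp\!\big[(6(\ln n)^2-2)\ln 2d-cn^2\big],
\]
and for any $c_2\in(0,c)$ the right-hand side is below $\exp(-c_2n^2)$ for $n$ large, since $(2n^2+1)^d$ and $\exp((6(\ln n)^2-2)\ln 2d)$ are of smaller order than $e^{(c-c_2)n^2}$.

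The step I expect to be the main obstacle is the first one, and inside it the hitting-probability bound $p_n\ge n^{-A}$: unlike in Lemma~\ref{B1}, here the box $\mathcal{K}(n^2)$ has radius comparable to the diffusive length $\sqrt{n^4}$ of the available time, so the very crude estimates of Lemma~\ref{B1} do not quite apply as stated; the point is simply that $p_n$ need only be polynomially small, which is true (with a logarithmic loss in $d=2$) and is completely swamped by the $e^{\delta n^4}$ independent chances to hit $x$. Once $\exp(cn^2)$ visits to every site of $\mathcal{K}(n^2)$ are secured, the $cn^2$ in the exponent dominates the $3(\ln n)^2\ln 2d$ coming from the number of short paths, and the remainder of the argument is identical to that of Lemma~\ref{B2}.
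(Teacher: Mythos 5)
Your argument is correct (at the same level of rigor as the paper's own sketches of Lemmas~\ref{B1} and~\ref{B2}), but it takes a more laborious route than the paper. The paper's proof is a two-line scaling reduction: it observes that the event $B_2^{m}$ of Lemma~\ref{B2} with parameter $m=n^2$ already concerns paths of length $3(\ln n^2)^2-1=12(\ln n)^2-1$ inside $\mathcal{K}(4n^2)\supset\mathcal{K}(n^2)$ followed by time $m^2=n^4$, so that $B_2^{n^2}\subset B_5^n$, and then simply reuses the bound $P(B_2^{n^2})\geq 1-\exp[d\ln(8n^2+1)+(6(\ln n^2)^2-2)\ln 2d-cn^2]$, which is below $1-\exp(-c_2n^2)$ for any $c_2<c$ and $n$ large. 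You instead re-derive the whole machinery at the larger scale: an analogue of Lemma~\ref{B1} (every site of $\mathcal{K}(n^2)$ visited $\exp(cn^2)$ times by time $n^4$, with error $e^{-C'n^4}$) followed by a verbatim repetition of the Chebyshev-plus-union-bound computation of Lemma~\ref{B2}. This is essentially the content of Lemma~\ref{B2} at parameter $n^2$, proved again by hand; it buys you a self-contained statement (and in fact a much better error in the first step), but it duplicates work, and the step you single out as the main obstacle --- the hitting-probability estimate for sites of $\mathcal{K}(n^2)$ within time $n^4$ --- is precisely what the reduction renders unnecessary: at parameter $n^2$ the ratio of the box radius to the diffusive length $\sqrt{n^4}$ is the same as (indeed better than) in Lemma~\ref{B1} itself, since that lemma already handles the box $\mathcal{K}(4m)$ with time $m^2$, so no new estimate is needed. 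Your concluding union bound and the passage to $c_2<c$ match the paper's.
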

\begin{proof}
Note that the event $B^m_2$ from Lemma \ref{B2}, where we take $m=n^2$ 
gives that up to time $n^4$, every path in $\mathcal{K}(4n^2)$ of length
$12\ln n-1$ gets followed by at least one particle. So, this implies that $B^n_5$ holds
and hence
$$B^{n^2}_2\subset B^n_5.$$
The last inclusion above implies
\begin{equation}
\label{altanta}
P(B^n_5)\geq P(B^{n^2}_2).
\end{equation}
We can now use the bound from Lemma \ref{B2} for bounding
the probability of $P(B^{n^2}_2)$. Together with (\ref{altanta}),
this yields
\begin{equation}
\label{atlantic}
P(B^n_5)\geq 1- \exp[d\ln(8n^2+1)+(6(\ln n^2)^2-2)\ln 2d-cn^2].
\end{equation}
Now note that for any constant $c_2>0$ for which $c>c_2$,
we have that:\\
for all $n$ large enough we have
that the bound on the right side of inequality (\ref{atlantic})
is less than $\exp(-c_2 n^2)$ which finishes this proof.
\end{proof}\\[3mm]

\begin{lemma}\label{B}[The first phase works.]
Let $B^n$ designate the event that  every word of size
$(\ln n)^2$  in $\xi_{\mathcal{K}(4n)}$ is  contained in the set
$SHORTWORDS^n$, and also
all  the strings in  $SHORTWORDS^n$  belong to
$W(\xi_{\mathcal{K}(n^2)})$, then
$$ B_1^n\cap B_2^n\cap B_3^n\cap B_4^n\cap B^n_5\subset B^n.$$
\end{lemma}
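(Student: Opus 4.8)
The plan is to prove the two inclusions $W(\xi_{\mathcal{K}(4n)})\subseteq SHORTWORDS^n$ and $SHORTWORDS^n\subseteq W(\xi_{\mathcal{K}(n^2)})$ separately, on the event $B_1^n\cap B_2^n\cap B_3^n\cap B_4^n\cap B^n_5$, by unwinding the two selection criteria that define $SHORTWORDS^n$.

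\textbf{First inclusion.} Take a word $w_2$ of size $(\ln n)^2$ appearing in $\xi_{\mathcal{K}(4n)}$, say read straight between points $x$ and $y$ with $y-x=((\ln n)^2-1)\vec e$ for a canonical vector $\vec e$. Extend the straight segment by $(\ln n)^2$ steps on each side (shrinking $n$ slightly if needed so that the whole length-$3(\ln n)^2$ segment stays inside $\mathcal{K}(4n)$; the constants absorb this) to get words $w_1,w_3$ of size $(\ln n)^2$ with $w_1w_2w_3$ read along a straight nearest-neighbor path of length $3(\ln n)^2-1$ in $\mathcal{K}(4n)$. By $B_2^n$ this path is followed by at least one particle before time $n^2$, so criterion (a) holds. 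For criterion (b), suppose $w_1ww_3$ is observed up to time $n^4$ for some word $w$ of size $(\ln n)^2$; it is generated by some nearest-neighbor path $R$ in $\mathcal{K}(n^4)$. The first $(\ln n)^2$ steps of $R$ generate $w_1$, the last $(\ln n)^2$ generate $w_3$; by $B_3^n$ the $w_1$-portion of $R$ must end in the diamond $T_{w_1}$ and the $w_3$-portion must start in $T_{w_3}$. A short geometric computation shows the only way to get from $T_{w_1}$ to $T_{w_3}$ in the available $(\ln n)^2-1$ intermediate steps is to walk straight along $\vec e$, hence the middle portion of $R$ reads exactly $w_2$, i.e. $w=w_2$. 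This gives $w_2\in SHORTWORDS^n$.

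\textbf{Second inclusion.} Let $w_2\in SHORTWORDS^n$, witnessed by $w_1,w_3$ satisfying (a) and (b). By (a), $w_1w_2w_3$ is observed before time $n^2$, hence is generated by some nearest-neighbor path $R$ which, since all particles are inside $\mathcal{K}(n^2)$ by time $n^2$, lies entirely in $\mathcal{K}(n^2)$. Let $x,y$ be the endpoints of the middle (length $(\ln n)^2-1$) portion of $R$ generating $w_2$. Apply $B_4^n$ to $x,y$: either alternative (a) holds — $x,y$ are at distance $(\ln n)^2-1$ along a coordinate direction, so $R$ goes straight there and $w_2=\xi_x\xi_{x+\vec e}\cdots\xi_y$ is a genuine word of $\xi_{\mathcal{K}(n^2)}$, and we are done — or alternative (b) holds, so there is a second nearest-neighbor path $R'$ from $x$ to $y$ of the same length generating a word $\bar w_2\neq w_2$. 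Splice $R'$ between the $w_1$- and $w_3$-portions of $R$ to obtain a nearest-neighbor path of length $3(\ln n)^2-1$ in $\mathcal{K}(n^2)$ generating $w_1\bar w_2 w_3$; by $B_5^n$ this path is followed by some particle before time $n^4$, so $w_1\bar w_2 w_3$ is observed up to time $n^4$ with $\bar w_2\neq w_2$, contradicting criterion (b). Hence only alternative (a) is possible and $w_2\in W(\xi_{\mathcal{K}(n^2)})$.

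\textbf{Main obstacle.} The delicate point is the geometric lemma used in the first inclusion: that on $B_3^n$, any observed pattern $w_1*^{(\ln n)^2}w_3$ forces the wildcard block to be $w_2$ — one must check that the diamonds $T_{w_1}$, $T_{w_3}$ are far enough apart (distance exactly $(\ln n)^2-1$ in the relevant geometry) that the only length-$((\ln n)^2-1)$ nearest-neighbor connection between them is the straight one, and that this straight connection reads precisely $w_2$. This is where the odd-length convention for diamonds and the exact sizes $(\ln n)^2$ of $w_1,w_2,w_3$ are used; everything else is bookkeeping with the events already shown to hold $whp$. One should also be slightly careful at the boundary of $\mathcal{K}(4n)$ when extending $w_2$ to $w_1w_2w_3$, but since $(\ln n)^2=o(n)$ this costs nothing asymptotically.
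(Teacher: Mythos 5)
Your proposal follows essentially the same route as the paper's proof: for the first inclusion you use $B_2^n$ to produce a particle reading $w_1w_2w_3$ straight and $B_3^n$ (the diamond property applied to $w_1$ and $w_3$) to force any observed $w_1ww_3$ to come from a straight middle portion, hence $w=w_2$; for the second inclusion you use confinement in $\mathcal{K}(n^2)$ by time $n^2$, then $B_4^n$ plus the splicing argument with $B_5^n$ to contradict the second selection criterion unless the middle portion is straight. The only differences are cosmetic (you flag the boundary extension inside $\mathcal{K}(4n)$ and the diamond-distance computation as points to check, which the paper also treats briefly), so the argument matches the paper's.
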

\begin{proof} 
We start by proving that every word in $ W(\xi_{\mathcal{K}(4n)})$ of size $(\ln n)^2$, say $w_2$,
is contained in $SHORTWORDS^n$. Then there exist two integer points
$x,y\in\mathcal{K}(4n)$ 
on the same  line, i.e., along the same direction of a coordinate,  and at a distance $3(\ln n)^2-1$
such that ``$w_2$ appears in the middle of the segment
$\bar{xy}$''. By this we mean that there exists a canonical vector $\vec{e}$
(such a vector consists of only one non-zero entry which is $1$ or $-1$),
so that
$$w_1w_2w_3=\xi_{x}\xi_{x+\vec{e}}\xi_{x+2\vec{e}}\ldots \xi_{y},$$
where $w_1$ and $w_2$ are words of size $(\ln n)^2$ and
$w_1w_2w_3$ designates the concatenation of $w_1$, $w_2$ and $w_3$.
By the event $B^n_2$, up to time $n^2$ there is at least one particle
which will go from $x$ to $y$ in exactly $3(\ln n)^2-1$ steps. That is
up to time $n^2$ there is a particle which follows the ``straight path
from $x$ to $y$'' given by
$$x,x+\vec{e}, x+2\vec{e},\ldots,y.$$
When doing so, we will see in the observations the string $w_1w_2w_3.$
Thus the triple $(w_1,w_2,w_3)$ satisfies the first criteria
of the first phase of $\Lambda^n$. 
It needs also to pass the second criteria to be selected. 

To see that $(w_1,w_2,w_3)$ satisfies second the criteria, let us assume that  $w$ is a word
of size $(\ln n)^2$ so that the concatenation
$w_1ww_2$ appears in the observation before time $n^4$. Then there exists a nearest
neighbor walk path $R$ of length $3(\ln n)^2-1$ generating $w_1ww_2$ on 
$\xi_{\mathcal{K}(n^4)}$.
By this we mean that $im R\subset \mathcal{K}(n^4)$ and
$\xi\circ R=w_1ww_2.$  By the event $B^n_3$ we have that $R((\ln n)^2-1)$
is in the diamond $T_{w_1}$ associated with $w_1$ and $R(2(\ln n)^2)$ is
in the diamond $T_{w_3}$ associated with $w_3$. So, when we take the restriction
of $R$ to the time interval $[(\ln n)^2-1,2(\ln n)^2]$ we get a nearest neighbor
walk going in $(\ln n)^2$ steps from $T_{w_1}$ to $T_{w_3}$. The only way to do this
is to go in a straight way from the point $x+((\ln n)^2-1)\vec{e}$ to $x+(2(\ln n)^2)\vec{e}$.
(The reason being that the distance between $T_{w_1}$ and $T_{w_3}$ is $(\ln n)^2$
and the only pairs of  points $(x',y')$ so that $x'\in T_{w_1}$ and $y'\in T_{w_3}$ 
and located at that distance $(\ln n)^2$ from each
other,  are $x'=x+((\ln n)^2-1)\vec{e}$ and $y'=x+(2(\ln n)^2)\vec{e}$.)
So, during the time interval $[(\ln n)^2-1,2(\ln n)^2]$ we have that $R$ is walking in a
straight way on the middle part of the segment $\bar{xy}$, that is walking in a straight way
from $x'$ to $y'$. Hence, during that time
$R$ is generating in the observation the word $w_2$. This prove that $w=w_2$. Hence, the triple
$(w_1,w_2,w_3)$ also passes the second criteria of the first phase of our algorithm, which implies that
$$w_2\in SHORTWORDS^n,$$
and hence
$$ W(\xi_{\mathcal{K}(4n)})\subset SHORTWORDS^n .$$

It remains to show that if the triple $(w_1,w_2,w_3)$ gets selected through
the first phase of our algorithm (hence passes the two selection criteria given there),
then indeed $w_2$ is a word of $\xi_{\mathcal{K}(n^2)}$. Now, to pass the first
selection criteria, we have that the concatenation $w_1w_2w_3$ must appear before time $n^2$
in the observations. Since the first particle starts at time $0$ in the origin, by time
$n^2$ all the particles must be still contained in the box $\mathcal{K}(n^2)$.
Hence, there must exist a nearest neighbor path $R$ of length $3(\ln n)^2-1$
which generates $w_1w_2w_3$ on $\xi_{\mathcal{K}(n^2)}$. Hence 
$im R\subset \mathcal{K}(n^2)$ and  $w_1w_2w_3=\xi\circ R.$

Assume that the restriction of $R$ to the time interval $[(\ln n)^2-1,2(\ln n)^2]$
would not be a ``straight walk'' on a line.  Then, by the event $B^n_4$
there would exist a modified nearest neighbor walk $R'$ of length $3(\ln n)^2-1$ for which
the following two conditions are satisfied:
\begin{enumerate}
\item  Restricted to the time interval $[(\ln n)^2-1,2(\ln n)^2]$, we have that
$R'$ generates a string $w$ different from $w_2$ on $\xi$.
\item Outside that time interval, $R'$ generates the same observations $w_1$ and $w_3$ as $R$.\end{enumerate}
Summarizing: the nearest neighbor walk $R'$ generates $w_1ww_3$ on 
$\xi_{\mathcal{K}(n^2)}$,
where $w\neq w_2$. But by the event $B^n_5$, every nearest neighbor-walk of length $3(\ln n)^3-1$
in $\mathcal{K}(n^2)$ gets followed at least once by a particle up to time $n^4$. Hence,
at least one particle follows the path of $R'$ before time $n^4$. Doing so it produces
the string $w_1ww_3$ with $w\neq w_2$ before time $n^4$ in the observations. This
implies however that the triple $(w_1,w_2,w_3)$ fails the second selection criteria
for phase one of our algorithm. This is a contradiction, since we assumed
that $(w_1,w_2,w_3)$ gets selected through
the first phase of $\Lambda^n$ (and hence passes the two selection criteria given there).
This proves by contradiction, that $R$ restricted to the time interval $[(\ln n)^2-1,2(\ln n)^2]$
can only be a ``straight walk''. Hence the sequence generated during
that time, that is $w_2$ can only be a word of the scenery $\xi$ in $\mathcal{K}(n^2)$.
This proves that $w_2$ is in $W(\xi_{\mathcal{K}(n^2)})$ and then
$$SHORTWORDS^n\subset W(\xi_{\mathcal{K}(n^2)}).$$ 

\end{proof}

\subsection{Second phase}
\label{s_2phase}
In this phase  the words of $SHORTWORDS^n$ are assembled
into longer words to construct the set of $LONGWORDS^n$ using the 
assembling  rule:  To puzzle two words together of $SHORTWORDS^n$,
the words  must coincide on at least $(\ln n)^2-1$ consecutive
letters. To get a correct assembling we will need that the short
words could be placed together in a unique way.
\begin{lemma}\label{C1}
Let $C_1^n$ be the event that, for all $x,y\in\mathcal{K}(n^2)$, 
the words 
$\xi_{x}\xi_{x+\vec{e}_i}\ldots 
\xi_{x+((\ln n)^2-1)\vec{e}_i}$
and $\xi_{y}\xi_{y+\vec{e}_j}\ldots 
\xi_{y+((\ln n)^2-1)\vec{e}_j}$
are identical only in the case $x=y$ and $\vec{e}_i=\vec{e}_j$.
Then,
$$
P(C_1^n)\geq 1-\exp[2d\ln(2n^2+1)-((\ln n)^2-1)\ln \kappa].
$$
\end{lemma}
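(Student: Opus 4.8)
The plan is to bound the probability of the complementary event $C_1^{nc}$ by a union bound over all pairs of ``straight words'' in $\mathcal{K}(n^2)$, using the i.i.d.\ structure of the scenery together with the observation (already exploited in Lemma~\ref{B3} and in the discussion of the DNA-sequencing trick) that two straight segments which are not literally the same segment agree at each coordinate with probability exactly $1/\kappa$, even when the segments overlap.

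First I would fix two starting points $x,y\in\mathcal{K}(n^2)$ and two canonical directions $\vec{e}_i,\vec{e}_j$ and consider the event that the straight words of length $(\ln n)^2$ issued from $(x,\vec{e}_i)$ and from $(y,\vec{e}_j)$ coincide, under the assumption that $(x,\vec{e}_i)\neq(y,\vec{e}_j)$ (as oriented segments). The key sub-step is: if the two segments are not the identical oriented segment, then one can read off the letters in an order so that each new letter compared is at a fresh site of $\mathbb{Z}^d$ that has not yet been used on either segment — hence independent of everything compared so far. Concretely, if the segments are disjoint this is immediate; if they overlap (same line, offset start, or a shared endpoint), then walking from one end one always encounters a site not previously seen, exactly as in the overlapping-subsequence argument on page with~\eqref{wordswords}. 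This gives that the two words agree with probability at most $(1/\kappa)^{(\ln n)^2-1}$ (the worst case being a shift by one along a common line, where two letters may be "lost", matching the bound in~\eqref{R1R2}).

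Then I would take the union bound. The number of choices of $x$ in $\mathcal{K}(n^2)$ is $(2n^2+1)^d$, and similarly for $y$, and there are $2d$ choices for each canonical direction; the $2d$ factors are absorbed into the polynomial factor. This yields
$$
P(C_1^{nc})\;\le\; (2d)^2 (2n^2+1)^{2d}\,\kappa^{-((\ln n)^2-1)}
\;=\;\exp\!\big[2\ln(2d)+2d\ln(2n^2+1)-((\ln n)^2-1)\ln\kappa\big],
$$
and since $(\ln n)^2$ dominates $\ln(2n^2+1)=O(\ln n)$, for $n$ large enough this is at most $\exp[2d\ln(2n^2+1)-((\ln n)^2-1)\ln\kappa]$, which is the stated bound. (One can also simply absorb the $2\ln(2d)$ constant into the polynomial term, or note the bound tends to $0$, which is all that is needed downstream.)

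The only delicate point — and the one I would write out carefully — is the independence/ordering argument in the overlapping case: one must verify that for every pair of distinct oriented straight segments of equal length in $\mathbb{Z}^d$, there is an enumeration of the comparison sites $(p_1,q_1),(p_2,q_2),\dots$ such that at each step at least one of $p_k,q_k$ is a site not occurring among $p_1,q_1,\dots,p_{k-1},q_{k-1}$; this is what licenses the factor $1/\kappa$ per step (and exhibits why at most two steps can fail to give a fresh comparison, e.g.\ a unit shift along a common line fixes the overlap and only the two extreme sites are "new" on their respective sides, but even then each contributes a fresh independent site, so in fact one gets $(\ln n)^2$ — being conservative we keep $(\ln n)^2-1$). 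Everything else is a routine union bound. No analogue of the branching-random-walk events $B^n_\cdot$ is needed here, since $C_1^n$ is a statement purely about the scenery $\xi$.
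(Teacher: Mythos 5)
Your proposal is correct and follows essentially the same route as the paper's proof: a union bound over all pairs of starting points (and directions) in $\mathcal{K}(n^2)$, with the per-pair coincidence probability bounded by $\kappa^{-((\ln n)^2-1)}$ via the fresh-site independence argument for possibly overlapping straight segments — indeed you treat the overlap case more carefully than the paper does. The only cosmetic difference is your extra factor $(2d)^2$ from counting directions, which cannot literally be absorbed to give the stated constant (the paper simply omits this factor in its union bound), but this is immaterial since only the $\exp(-c(\ln n)^2)$ decay of $P(C_1^{nc})$ is used downstream.
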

\begin{proof}
Let $x$ and $y$ be two points in $\mathcal{K}(n^2)$ and define the
event
$$C_{x,y}=\{\xi_{x}\xi_{x+\vec{e}_i}\xi_{x+2\vec{e}_i}
\ldots \xi_{x+((\ln
n)^2-1)\vec{e}_i}=\xi_{y}\xi_{y+\vec{e}_j}\xi_{y+2\vec{e}_j}\ldots
\xi_{y+((\ln n)^2-1)\vec{e}_j}\},$$
with $\vec{e}_i$ and $\vec{e}_j$ two canonical vectors in $\Z^d$.
Clearly,
$$C_1^n=\bigcap_{x,y}C_{x,y},$$
where the intersection above is taken over all
$(x,y)\in\mathcal{K}(n^2)$,
 and it leads to
\begin{eqnarray}
P\big(C_1^{nc}\big)\leq\sum_{x,y}P(C_{x,y}^c).
\end{eqnarray}
If $x=y$ and $\vec{e}_i\neq\vec{e}_j$, observe that  the words
intersect themselves only at the first position and since the
scenery is i.i.d., then $P(C_{x,y}^c)=\Big(\frac{1}{\kappa
}\Big)^{(\ln n)^2-1}$. On the other hand, when $x\neq y$, it is
somewhat similar to the previous case, i.e., the words intersect
themselves at most  by only one position, thus
$P(C_{x,y}^c)\leq\Big(\frac{1}{\kappa }\Big)^{(\ln n)^2-1}$.
Hence, 
\begin{eqnarray}
P\big(C_1^{nc})&\leq&(2n^2+1)^{2d}\Big(\frac{1}{\kappa }\Big)^{(\ln
n)^2-1}\nonumber\\
&<&\exp[2d\ln(2n^2+1)-((\ln n)^2-1)\ln \kappa ].
\end{eqnarray}
\end{proof}

\begin{lemma}\label{C}[The second phase works.]
Let $C^n$ designate the event that  every word of size
$4n$  in $\xi_{\mathcal{K}(4n)}$ is  contained in
$LONGWORDS^n$, and
all  the words in  $LONGWORDS^n$  belong to
$\mathcal{W}_{4n}(\xi_{\mathcal{K}(n^2)})$, i.e., 
$$\mathcal{W}_{4n}(\xi_{\mathcal{K}(4n)})\subseteq LONGWORDS^n\subseteq 
\mathcal{W}_{4n}(\xi_{\mathcal{K}(n^2)}),$$
where $\mathcal{W}_{4n}(\xi_{\mathcal{K}(4n)})$ and $\mathcal{W}_{4n}(\xi_{\mathcal{K}(n^2)})$ 
are the set of all  words of size $4n$ in $\xi_{\mathcal{K}(4n)}$
and $\xi_{\mathcal{K}(n^2)}$ respectively, then,
$$ 
B^n\cap C_1^n\subset C^n.
$$
\end{lemma}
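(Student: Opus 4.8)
The plan is to verify separately the two inclusions $\mathcal{W}_{4n}(\xi_{\mathcal{K}(4n)})\subseteq LONGWORDS^n$ and $LONGWORDS^n\subseteq \mathcal{W}_{4n}(\xi_{\mathcal{K}(n^2)})$, under the assumption that $B^n$ and $C_1^n$ both hold. For the first inclusion I would take any word $v$ of size $4n$ of $\xi_{\mathcal{K}(4n)}$; by definition there are a point $x$ and a canonical vector $\vec{e}$ with $v=\xi_x\xi_{x+\vec{e}}\cdots\xi_{x+(4n)\vec{e}}$ and all of $x,x+\vec{e},\dots,x+(4n)\vec{e}$ in $\mathcal{K}(4n)$. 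Its contiguous sub-blocks of size $(\ln n)^2$, namely $w^{(i)}=\xi_{x+i\vec{e}}\cdots\xi_{x+(i+(\ln n)^2-1)\vec{e}}$ for $i=0,1,\dots,4n-(\ln n)^2+1$, are words of size $(\ln n)^2$ of $\xi_{\mathcal{K}(4n)}$, hence all belong to $SHORTWORDS^n$ by the inclusion $W(\xi_{\mathcal{K}(4n)})\subseteq SHORTWORDS^n$ contained in $B^n$; moreover $w^{(i)}$ and $w^{(i+1)}$ agree on their $(\ln n)^2-1$ overlapping letters. Therefore the chain $w^{(0)},w^{(1)},\dots$ is a legal assembly for the rule of the second phase, it has total length $4n+1$, and it reconstructs exactly $v$; so $v\in LONGWORDS^n$.

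For the reverse inclusion, let $L\in LONGWORDS^n$ be obtained as a legal chain $u_0,u_1,\dots,u_m$ of elements of $SHORTWORDS^n$, consecutive ones overlapping on $(\ln n)^2-1$ letters, with total length $4n+1$. By the inclusion $SHORTWORDS^n\subseteq W(\xi_{\mathcal{K}(n^2)})$ contained in $B^n$, each $u_j$ occurs in $\xi_{\mathcal{K}(n^2)}$: there are $x_j$ and a canonical $\vec{f}_j$ with $u_j=\xi_{x_j}\xi_{x_j+\vec{f}_j}\cdots\xi_{x_j+((\ln n)^2-1)\vec{f}_j}$, all these points lying in $\mathcal{K}(n^2)$. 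I would prove by induction on $j$ that $\vec{f}_j=\vec{f}_0$ and $x_j=x_0+j\vec{f}_0$. The base case is trivial; for the step, the overlap rule forces the last $(\ln n)^2-1$ letters of $u_j$ to equal the first $(\ln n)^2-1$ letters of $u_{j+1}$. By the induction hypothesis the former is the word $t:=\xi_{x_0+(j+1)\vec{f}_0}\cdots\xi_{x_0+(j+(\ln n)^2-1)\vec{f}_0}$ read along $\vec{f}_0$, while the latter is $\xi_{x_{j+1}}\cdots\xi_{x_{j+1}+((\ln n)^2-2)\vec{f}_{j+1}}$ read along $\vec{f}_{j+1}$; hence the word $t$, which has size $(\ln n)^2-1$, appears in $\mathcal{K}(n^2)$ both at $(x_0+(j+1)\vec{f}_0,\vec{f}_0)$ and at $(x_{j+1},\vec{f}_{j+1})$. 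The uniqueness of words of size $(\ln n)^2-1$ in $\mathcal{K}(n^2)$ guaranteed by $C_1^n$ (Lemma~\ref{C1}) then yields $x_{j+1}=x_0+(j+1)\vec{f}_0$ and $\vec{f}_{j+1}=\vec{f}_0$, which is the inductive step; note also that the single new point $x_0+(j+(\ln n)^2)\vec{f}_0$ introduced by $u_{j+1}$ automatically lies in $\mathcal{K}(n^2)$, being one of the points of the valid occurrence of $u_{j+1}$. It follows that $L=\xi_{x_0}\xi_{x_0+\vec{f}_0}\cdots\xi_{x_0+(4n)\vec{f}_0}$ with all these points in $\mathcal{K}(n^2)$, i.e. $L\in\mathcal{W}_{4n}(\xi_{\mathcal{K}(n^2)})$. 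The two inclusions together are exactly $B^n\cap C_1^n\subset C^n$.

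The first inclusion and the index bookkeeping are routine; the heart of the argument is the induction in the second inclusion, where everything rests on $C_1^n$. The point is that, without the unique placement of the $(\ln n)^2-1$-overlap blocks, a legal chain could a priori ``bend'' or fold back onto a repeated region of $\xi_{\mathcal{K}(n^2)}$ and output a string that is not a straight word of the scenery. I expect the only other thing needing care---though, as noted, it comes for free once $C_1^n$ holds---is to check that an assembled long word cannot leak out of $\mathcal{K}(n^2)$: a long word of size $4n$ may sit arbitrarily close to the boundary of $\mathcal{K}(n^2)$, but the mere existence of a block that can be chained on next already certifies that the straight continuation is still inside the box.
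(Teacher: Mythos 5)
Your proof is correct and follows essentially the same route as the paper's: the sandwich $W(\xi_{\mathcal{K}(4n)})\subseteq SHORTWORDS^n\subseteq W(\xi_{\mathcal{K}(n^2)})$ supplied by $B^n$ gives the first inclusion by chaining the overlapping size-$(\ln n)^2$ sub-blocks, and unique placement from $C_1^n$ forces any legal chain to trace a straight line inside $\mathcal{K}(n^2)$, giving the second. The paper's proof is only a sketch of this; your induction (including the explicit appeal to uniqueness of the length-$((\ln n)^2-1)$ overlap blocks, which is exactly how the paper itself glosses $C_1^n$ in its overview even though Lemma~\ref{C1} is stated for length-$(\ln n)^2$ words) just fills in the details the paper leaves implicit.
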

\begin{proof} 
Let $W(\xi_{\mathcal{K}(4n)})$ and $W(\xi_{\mathcal{K}(n^2)})$ 
be the set of all  words of size $(\ln n)2$ in
$\xi_{\mathcal{K}(4n)}$ and $\xi_{\mathcal{K}(n^2)}$ respectively.
Once the first phase has worked, i.e., when $B^n$ occurs  we have 
$$
W(\xi_{\mathcal{K}(4n)})\subseteq SHORTWORDS^n\subseteq
 W(\xi_{\mathcal{K}(n^2)}).
$$
If the short words can be placed together in a unique way using 
some  assembling  rule,   and  it is done until getting strings of
total exactly equal to $4n$, then every word of size
$4n$  in $\xi_{\mathcal{K}(4n)}$ is  contained in the set of
assembled 
words, i.e., in
$LONGWORDS^n$. On the other hand, if the assembled process is made
using 
all words in $SHORTWORDS^n$, then all  the words in  $LONGWORDS^n$ 
belong to
$\mathcal{W}_{4n}(\xi_{\mathcal{K}(n^2)})$ because $SHORTWORDS^n
\subseteq W(\xi_{\mathcal{K}(n^2)})$.

Under the assembling rule given in Lemma~\ref{C1} we conclude that
 $B^n\cap C_1^n\subset C^n.$
\end{proof} 

\subsection{Third phase}\label{s_3phase}
In this phase we use the previous two phases of $\varLambda^n$ 
but with the parameter
$n^{0.25}$ instead of $n$. The idea is to take one long word from 
$LONGWORDS^{n^{0.25}}$, say $v$, which will be of size
$4n^{0.25}$ instead of $4n$, then,  select any long word from
 $LONGWORDS^n$, say $w$,
which contains $v$ in its middle. In this manner we should
hopefully  get a word which has its middle not further than $\sqrt{n}$ from the origin.  

In the next lemma we show that when the first two phases of our
algorithm with parameter $n$ as well as $n^{0.25}$ both work, then the third phase must work as well.

\begin{lemma}\label{D}[The third phase works.]
Let $D^n$ be the event that the third phase of $\Lambda^n$ works. 
This means that the long-word of $LONGWORD^n$ selected by the third phase has its 
center
not further than $n^{0.5}$ from the origin.
 Thus $$C^n_1\cap C^n\cap C^{n^{0.25}}\subset
D^n.$$
\end{lemma}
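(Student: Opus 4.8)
The plan is to unwind the definitions and show that the geometric conclusion ``$w$ is centered within $n^{0.5}$ of the origin'' follows purely from combinatorial uniqueness facts, namely from $C^{n^{0.25}}$, $C^n$, and $C^n_1$. First I would use $C^{n^{0.25}}$: since the first two phases run with parameter $n^{0.25}$ work, the seed word $w_0$ selected from $LONGWORDS^{n^{0.25}}$ is a genuine word of size $4n^{0.25}$ of $\xi_{\mathcal{K}((n^{0.25})^2)}=\xi_{\mathcal{K}(n^{0.5})}$. In particular $w_0$ is read along a straight segment $\bar{ab}$ contained in $\mathcal{K}(n^{0.5})$, so its middle letter sits at a point whose every coordinate has absolute value at most $n^{0.5}$. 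This is the only place where the smaller parameter is exploited, and it is what pins the seed near the origin.

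Next I would use $C^n$ together with $C^n_1$ to control the larger word $w_L$. By $C^n$, every word of $LONGWORDS^n$ — in particular the $w$ chosen by $\Lambda^n$ to contain $w_0$ in its center — is a genuine word of $\xi_{\mathcal{K}(n^2)}$, hence read along some straight segment $\bar{xy}\subseteq\mathcal{K}(n^2)$. The algorithm picks $w_L$ so that $w_0$ occurs in $w_L$ with its middle letter coinciding with the middle letter of $w_L$. Now the subword of $w_L$ occupying the central $4n^{0.25}+1$ positions is itself a length-$4n^{0.25}$ straight word of $\xi_{\mathcal{K}(n^2)}$, and it equals $w_0$, which is also such a straight word. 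By the uniqueness event $C^n_1$ (applied with the parameter $n$, noting $\mathcal{K}(n^{0.5})\subseteq\mathcal{K}(n^2)$ so both occurrences live in $\mathcal{K}(n^2)$), two length-$(\ln n)^2$ straight words of $\xi_{\mathcal{K}(n^2)}$ that agree must start at the same point in the same canonical direction; applying this to overlapping length-$(\ln n)^2$ sub-blocks of the two length-$4n^{0.25}$ straight words (valid once $n$ is large enough that $4n^{0.25}\ge(\ln n)^2$) forces the central block of $w_L$ and $w_0$ to be read along the \emph{same} segment of $\mathbb{Z}^d$. Hence the middle letter of $w_L$ is the middle letter of $w_0$, which by the previous paragraph lies within $n^{0.5}$ of the origin in every coordinate. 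Since $\Lambda^n$ places $w_L$ so that this middle letter is at the algorithm's origin, the reconstructed long word is centered within $n^{0.5}$ of the true origin, which is exactly the event $D^n$.

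The main obstacle is the middle step: arguing rigorously that ``$w_0$ appears in the center of $w_L$'' as strings of colors actually forces the two to be read along the same segment of $\mathbb{Z}^d$, rather than merely along two different segments that happen to carry identical colors. This is precisely what $C^n_1$ rules out, but one must be a little careful that $C^n_1$ is stated for words of size $(\ln n)^2$, so the argument has to tile the length-$4n^{0.25}$ overlap by length-$(\ln n)^2$ windows and chain the resulting equalities of base-points and directions — a routine but necessary bookkeeping step. Everything else is definition-chasing, with no probability estimates needed here since the quantitative bounds were already absorbed into $C^{n^{0.25}}$, $C^n$, $C^n_1$.
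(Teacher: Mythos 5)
Your proposal is correct and follows essentially the same route as the paper: use $C^{n^{0.25}}$ to pin the seed word inside $\mathcal{K}(n^{0.5})$, use $C^n$ to know the chosen long word is a genuine word of $\xi_{\mathcal{K}(n^2)}$, and use $C^n_1$ to force the two occurrences to lie on the same segment so the centers coincide. The only difference is that you spell out the tiling of the length-$4n^{0.25}$ overlap by length-$(\ln n)^2$ windows, a detail the paper dismisses as ``not difficult to see,'' so your write-up is, if anything, slightly more complete.
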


\begin{proof}
Consider any word  from $LONGWORD^n$ that contains  in its middle 
a word from
$LONGWORDS^{n^{0.25}}$, i.e., take  
$$w=\xi_{x}\xi_{x+\vec{e_i}}\xi_{x+2\vec{e_i}}\ldots \xi_{x+4n-1\vec{e_i}}$$
in $LONGWORD^n$ such that 
$$v=\xi_{x+2n\vec{e_i}},\dots,\xi_{x+(2n+4n^{0.25}-1)\vec{e_i}}$$
belongs to  $LONGWORDS^{n^{0.25}}$.  

By $C^{n^{0.25}}$ the first two phases of the algorithm with parameter $n^{0.25}$
work. This implies that $v$ is a word (of size $4n^{0.25}$)
contained in $\xi_{\mathcal{K}(n^{0.5})}$. It is not difficult to see  that  $C^n_1$ implies that
any word of that size which appears in $\xi_{\mathcal{K}(n^2)}$,
appears in a ``unique position'' therein. 
By $C^n$ all the words of $LONGWORD^n$ are contained in $\xi_{\mathcal{K}(n^2)}$.
Thus, when a word $w$ of $LONGWORD^n$ contains a word $v$ of $LONGWORD^{n^{0.25}}$,
then the two words must lie (in $\mathcal{K}(n^2)$) on top of each other in a unique
way, which implies that the middle of $w$ is also the middle of $v$.
By $C^{n^{0.25}}$ we have that the middle of $v$ (the way $v$ appears in $\xi_{\mathcal{K}(n^{0.5})}$) is not further from the origin than $n^{0.5}$. Hence, the middle of $w$ (in where it appears in $\xi_{\mathcal{K}(n^2)}$)  is also not further than $n^{0.5}$ from the origin. This finishes
our proof.
\end{proof}


\subsection{Fourth phase}
\label{s_4phase}
For the fourth and last phase to work correctly,
we need to be able to identify which words
contained in $\xi_{\mathcal{K}(n^2)}$ are neighbors of each other.
Let us give the definition of neighboring words. 

Let $I$ be a box of $\mathbb{Z}^d$ and $w$ and $v$ be two words
(of the same length) contained in $\xi_{I}$. We say that $w$ and $v$ are {\it neighbors
of each other} if there exist $x\in I$ and  $\vec{u}, \vec{s}\in\{\pm\vec{e}_i, i=1,\ldots,d\}$
such that:
$$
w=\xi_{x}\xi_{x+\vec{u}}\xi_{x+2\vec{u}}\ldots \xi_{k\vec{u}}
$$
and 
$$
v=\xi_{x+\vec{s}}\xi_{x+\vec{u}+\vec{s}}\xi_{x+2\vec{u}+\vec{s}}
\ldots 
\xi_{x+k\vec{u}+\vec{s}},
$$
where $\vec{s}$ is orthogonal to $\vec{u}$ and all the points
$x+\vec{s}+i\vec{u}$ and $x+i\vec{u}$ are in $I$ for all
$i=0,1,2,\ldots,k$.
 
In other words, two words $w$ and $v$ contained in the restriction
$\xi_{I}$ are called neighbors if we can read them in positions
which are parallel to each other and at distance~$1$.


\begin{lemma}\label{4phaseworks}[The fourth phase works.]
Let $F^n$ denote the event that for the words of
$LONGWORDS^n$  the three conditions in the forth phase of $\Lambda^n$
allows to correctly identify and chose neighbors.
Interestingly, the events  $B^n_2$, $B^n_3$ and $C^n_1$ are
enough for  $F^n$ to occur. That is,
$$ B^n_2\cap B^n_3 \cap C^n_1\subset F^n$$
\end{lemma}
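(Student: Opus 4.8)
The goal is to prove the two inclusions that together say the fourth phase recognizes neighbours correctly: under $B^n_2\cap B^n_3\cap C^n_1$, a pair $(v,w)$ of words of $LONGWORDS^n$ passes tests (a)--(c) of the fourth phase \emph{if and only if} $v$ and $w$ are genuine neighbours in $\xi_{\mathcal{K}(n^2)}$. Throughout I use that, by Lemma~\ref{C}, the words of $LONGWORDS^n$ are words of $\xi_{\mathcal{K}(n^2)}$, and that the long words actually used by $\Lambda^n$ lie near the origin, so the short detour paths below stay inside $\mathcal{K}(4n)$. For the ``genuine neighbours are detected'' direction, suppose $v=\xi_{x}\xi_{x+\vec u}\cdots$ and $w=\xi_{x+\vec s}\xi_{x+\vec u+\vec s}\cdots$ are neighbours with $\vec s\perp\vec u$ unit vectors. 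Near the middle of $v$ read off, straight along $\vec u$, three consecutive blocks $v_a,v_b,v_c$ of sizes $(\ln n)^2$, $(\ln n)^2-2$, $(\ln n)^2$; then $v_av_bv_c$ is a substring of $v$ by construction. Let $w_b$ be the block of size $(\ln n)^2$ read straight along $\vec u$ on the line $\{x+t\vec u+\vec s\}$, positioned so that its middle letter sits one step (in direction $\vec s$) off the middle letter of $v_b$; then $w_b$ is a substring of $w$ in the relative position required by (c), overhanging $v_b$ by one site on each end. Now exhibit the nearest neighbour path that reads $v_a$ straight, steps once by $\vec s$, reads $w_b$ straight, steps once by $-\vec s$, and reads $v_c$ straight: a short count shows it has length $3(\ln n)^2-1$ and reads exactly $v_aw_bv_c$; being contained in $\mathcal{K}(4n)$, it is followed by at least one particle before time $n^2\le n^4$ by $B^n_2$. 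Hence $v_aw_bv_c$ is observed and tests (a), (b), (c) all hold.

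\textbf{Soundness.} Conversely, suppose $v\neq w$ in $LONGWORDS^n$ pass (a)--(c) with witnesses $v_a,v_b,v_c,w_b$. Since $v_av_bv_c$ is a straight substring of $v$, the words $v_a$ and $v_c$ occur straight in the scenery at the positions they have inside $v$; by $C^n_1$ these are their \emph{only} occurrences in $\mathcal{K}(n^2)$, so their diamonds $T_{v_a},T_{v_c}$ are pinned down, lying along $v$'s line with centres separated by $2((\ln n)^2-1)$ and at mutual $\ell_1$-distance exactly $(\ln n)^2-1$ (the extremal pair being the two sites of that line immediately adjacent to $v_b$). Likewise $w_b$, being a straight substring of $w$, has by $C^n_1$ a unique occurrence and a pinned diamond $T_{w_b}$. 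Let $R$ be a nearest neighbour path in $\mathcal{K}(n^4)$ with $\xi\circ R=v_aw_bv_c$, which exists because this string is observed by time $n^4$. Applying $B^n_3$ to $v_a$, to $w_b$ and to $v_c$ (all of size $(\ln n)^2$, all scenery words): the vertex of $R$ reading the last letter of $v_a$ lies in $T_{v_a}$, the vertices reading the first and last letters of $w_b$ lie in $T_{w_b}$, and the vertex reading the first letter of $v_c$ lies in $T_{v_c}$; since consecutive vertices of $R$ are at distance $1$, this yields $\mathrm{dist}(T_{v_a},T_{w_b})\le1$ and $\mathrm{dist}(T_{w_b},T_{v_c})\le1$.

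\textbf{Pinning $w$.} Now comes a geometric pigeonhole. Write the centre of $T_{w_b}$ as the centre of $v_b$'s middle site plus a $\vec u$-displacement $\delta$ and an orthogonal displacement $c'$. The two distance-$\le1$ constraints, together with the diamond radius $\frac{(\ln n)^2-1}{2}$ and the separation $2((\ln n)^2-1)$ of the centres of $T_{v_a}$ and $T_{v_c}$, force $\|c'\|_1\le1$, and moreover $\delta=0$ when $\|c'\|_1=1$. The case $c'=0$ would place $w_b$ straight on $v$'s own line; but then $C^n_1$ makes this the unique occurrence of $w_b$, and since $w_b\subset w$ with $|w_b|=(\ln n)^2$, the length-$4n$ straight word $w$ would have to agree with $v$ on $(\ln n)^2$ consecutive collinear sites, forcing $w=v$, which is excluded. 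Hence $T_{w_b}$ is centred exactly one site off the middle of $v_b$, in some direction $\vec s\perp\vec u$, so the unique occurrence of $w_b$ runs straight along $\vec u$ on the line parallel to $v$'s line shifted by $\vec s$, covering the columns over $v_b$ plus one site on each side. Feeding this into test (c) --- equal relative positions of $w_b$ in $w$ and of $v_b$ in $v$ --- forces $w$ to be read straight along $\vec u$ on exactly that shifted line, with the same starting offset as $v$; that is, $w$ is the $\vec s$-translate of $v$, so $v$ and $w$ are neighbours. This proves $B^n_2\cap B^n_3\cap C^n_1\subset F^n$.

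\textbf{Main obstacle.} The only genuinely delicate step is the pigeonhole calibration: one must verify that the diamond radii and the choice $|v_b|=(\ln n)^2-2$ (two shorter than the flanking blocks, absorbing exactly the two ``wasted'' steps of the one-up/one-down detour) make the step/distance budget leave room for $\|c'\|_1\le1$ but not for $\|c'\|_1=2$, and must handle the parity of $(\ln n)^2$ and the boundary sites of the diamonds. Excluding the degenerate coincidence $w=v$ via $C^n_1$, and checking that $B^n_3$ legitimately applies to each of $v_a,w_b,v_c$, are routine once $C^n$ is in force.
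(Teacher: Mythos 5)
Your completeness half (a genuine neighbouring pair is detected) is exactly the paper's argument: the explicit detour path of length $3(\ln n)^2-1$ reading $v_a$, one step along $\vec s$, $w_b$, one step back, $v_c$, inside $\mathcal{K}(4n)$, followed by a particle by time $n^2$ thanks to $B^n_2$. The soundness half, however, has a genuine gap at the ``Pinning $w$'' step. The diamond associated with an occurrence of a word is a ball around the midpoint of the segment on which it is written; it does not remember the direction of that segment. Your constraints $\mathrm{dist}(T_{v_a},T_{w_b})\le 1$ and $\mathrm{dist}(T_{w_b},T_{v_c})\le 1$ therefore only locate the \emph{centre} of the unique occurrence of $w_b$, and the inference ``so the unique occurrence of $w_b$ runs straight along $\vec u$ on the line parallel to $v$'s line shifted by $\vec s$'' does not follow. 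A long word $w$ written in a completely different direction (say perpendicular to $v$), whose straight substring $w_b$ happens to be centred one site off the middle of $v_b$, satisfies every constraint you use, passes (a)--(c) in your reading, and yet is not a neighbour of $v$; your argument would certify this false positive. The paper closes exactly this hole differently: it uses the step count between $T_{v_a}$ and $T_{v_c}$ (the middle stretch of $R$ must cross from one diamond to the other in exactly $(\ln n)^2+1$ steps) to force that stretch to walk essentially straight, so that the observed \emph{string} $w_b$ is identified with the scenery word written on the segment from $x+\vec s$ to $y+\vec s$; only after that identification does $C^n_1$ (uniqueness of occurrence in position \emph{and} direction) pin down the line and direction on which $w$ is written.

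A second, related flaw is your treatment of the case $c'=0$: you claim it forces $w=v$, but with $c'=0$ your inequalities allow $\delta=\pm 1$, and they allow the occurrence of $w_b$ to run through that centre in another direction, so $w$ could be a shift of $v$ along the same line, or a word in a different direction, neither equal to $v$. The paper excludes the on-line case by a different and sharper device: if $w_b$ sat on $v$'s line, then both $v_av_bv_c$ and $v_aw_bv_c$ would be written there, and since $|w_b|=(\ln n)^2$ exceeds $|v_b|=(\ln n)^2-2$, the word $v_a$ or $v_c$ would have to occur in two distinct places, contradicting $C^n_1$. So while your use of $B^n_3$ applied to $w_b$ itself is a nice idea, as written the soundness direction does not go through; you need an additional argument showing that the string $w_b$ observed between $v_a$ and $v_c$ coincides with a scenery word written straight along $\vec u$ adjacent to $v$ (the paper's straight-stretch argument, or an equivalent), before $C^n_1$ can do the pinning you ask of it.
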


\begin{proof} We need to show that when all the events $B^n_2$, $B^n_3$ and  $C^n_1$ occur,
then we identify  correctly  and  chose the long words from
$LONGWORDS^n$ which are neighbors of each  other. 

If two words $v$ and $w$ belonging to $LONGWORDS^N$ were selected by the fourth phase of our algorithm to be put on top of each other, this means that $\Lambda^n$ ``estimated'' that $v$ and $w$ were neighbors, is because  the following three conditions  were satisfied:
\begin{enumerate}
\item{} There   exist  $4$ words $v_a$, $v_b$, $v_c$ and
$w_b$ having all size $(\ln n)^2$ except for $v_b$ which has
size $(\ln n)^2-2$ and such that, the concatenation $v_av_bv_c$ is contained in $v$, whilst up to
time $n^4$ we observe at least once $v_aw_bv_c$.
\item{} The word $w_b$ is contained in $w$.
\item{} The  position of the middle letter of 
$v_b$ in $v$ should be the same as the middle letter  
position of $w_b$ in $w$.
{\footnotesize  }
\end{enumerate}
Assume that the three previous conditions are satisfied, and let $\vec{u}$ be the  direction of the word $v$, so, we have two points $x$ and $y$ in 
$\mathbb{Z}^d$ such that $y=x+((\ln n)^2-1) \vec{u}$ and
\begin{itemize}
\item  to the left (with respect to the  direction of $\vec{u}$) from $x$ we read $v_a$:
$$v_a=\xi_{x-\vec{u}((\ln n)^2+1)}\xi_{x-\vec{u}((\ln n)^2+2)}
\xi_{x-\vec{u}((\ln n)^2+3)}\ldots \xi_x,$$
\item between $x$ and $y$ we read $v_b$:
$$
v_b=\xi_{x+\vec{u}}\xi_{x+2\vec{u}}\ldots \xi_{y-\vec{u}}, and
$$
\item to the right of $y$ we read $v_c$:
$$
v_c=\xi_{y}\xi_{y+\vec{u}}\xi_{y+2\vec{u}}\ldots 
\xi_{y+((\ln n)^2-1)\vec{u}}.
$$
\end{itemize}
By the first condition we know that   up to time $n^4$ we observe at least once the concatenated word $v_aw_bv_c$. Hence, there exists a nearest neighbor walk  $R$ on the time interval $[1,3(\ln n)^2]$ which generated $v_aw_bv_c$ on $\xi_{\mathcal{K}(n^4)}$, so that
$$
\xi\circ R=v_aw_bv_c.
$$
Note that  after time $2(\ln n)^2$ $R$  generates~$v_c$,  then by $B^n_3$ we have that
$R(2(\ln n)^2+1)$ must be in the diamond $T_c$ associated with $v_c$. 
Similarly, we get that $R((\ln n)^2)$ must be in the diamond
$T_a$ associated with $v_a$. 
%

Now observe that  to go in $(\ln n)^2+1$ steps with a nearest neighbor walk 
from the diamond $T_a$ to the diamond $T_c$,
there are only 3 possibilities (remember that  $x$ and $y$
are at distance $(\ln n)^2-1$ from each other):
\begin{itemize}
\item[I)] Going from $x$ to $y$ always making steps with respect to $\vec{u}$ (in 2 dimensions, say to the right), and once making one step with respect to $-\vec{u}$ (one step to the left). No steps in the directions orthogonal to~$\vec{u}$.
\item[II)] Starting at $x$, make one step in some
direction orthogonal to~$\vec{u}$, say with respect to $\vec{s}$ (in 2 dimensions,  say one step up) then all steps with respect to $\vec{u}$ (to the right), and once making one step with respect to $-\vec{s}$ (one step down) in order to reach $y$. 
\item[III)] Starting at $x+\vec{s}-\vec{u}$ instead of $x$ and arriving in $y+\vec{s}+\vec{u}$ instead of $y$.
\end{itemize}

Since the nearest neighbor walk $R$ between time $(\ln n)^2 $ and
$2(\ln n)^2+1$ must be walking from the diamond $T_a$ 
to the diamond  $T_c$ in $(\ln n)^2+1$ steps, then it  must satisfy during that time one of the three conditions above.

If condition II holds, then $w_b$ is the word which is ``written''
in the scenery $\xi$ between $x+\vec{s}$ and $y+\vec{s}$, i.e., 
$$
w_b=\xi_{x+\vec{s}}\xi_{x+\vec{s}+\vec{u}}\xi_{x+\vec{s}+2\vec{u}}
\ldots 
\xi_{y+\vec{s}}.
$$ 
That shows that the line between the two points $x+\vec{s}$ and
$y+\vec{s}$ is where the word~$w_b$ is written in the scenery. 
Now observe that  $w$ appears in $\xi_{\mathcal{K}(n^2)}$ because $w$ is in $LONGWORDS^N$, and by  the second condition  $w$ must contain the word $w_b$, so $w$  must contain the points $x+\vec{s}$ and $y+\vec{s}$.  By the event $ C^n_1$, any word of size $(\ln n)^2$ appears only in one place in $\xi_{\mathcal{K}(n^2)}$, then as $w_b$ has size  $(\ln n)^2$, the place where the word $w$ is written in
$\xi_{\mathcal{K}(n^2)}$ is a line parallel to the line $\bar{xy}$ and at distance~$1$.
This means that the word $w$ is a neighbor of the word $v$ in  $\xi_{\mathcal{K}(n^2)}$.

When condition III) holds, a very similar argument leads to the  same conclusion
that   $w$ and $v$ are neighbors in $\xi_{\mathcal{K}(n^2)}$.

Finally, when condition I) holds, then we would have that
$v_aw_bv_c$ appears in $\xi_{\mathcal{K}(n^2)}$,  but we have also
that $v_av_bv_c$ appears in $\xi_{\mathcal{K}(n^2)}$ (since we take
the words $w$ and $v$ from the set $LONGWORDS^n$ and since by the event
$C^n_1$ these are words from the scenery $\xi_{\mathcal{K}(n^2)}$).
However the word $w_b$ is longer than $v_b$. This implies that either
$v_a$ or $v_c$ (or both together) must appear 
in two different places in $\xi_{\mathcal{K}(n^2)}$. But this
would contradict the event $C^n_1$. Hence, we can exclude this case.

Thus, we have  proven that if the events  $B^n_3$ and $C^n_1$ all hold,  then the words selected to be put next to each other in phase 4 of the algorithm are really 
neighbors in the way they appear in the restricted scenery $\xi_{\mathcal{K}(n^2)}$. 
So, when, we use the criteria of the 4th phase of the algorithm to determine the words
which should be neighbors in the scenery we make no mistake, by
identifying as neighbors whilst they are not. 

The next question is whether for all the neighboring words $v$ and $w$ in
$\xi_{\mathcal{K}(4n)}$ we recognize them as neighbors, when we apply the 
fourth phase of our algorithm. This is indeed true, due to
the event $B^n_2$. Let us explain this in more detail.
Let $v$ and $w$ be to neighboring words in
$\xi_{\mathcal{K}(4n)}$. We also assume that both $v$ and $w$ have length $4n$, are written in the direction of~$\vec{u}$, 
and $w$ is parallel and at distance 1 (in the direction
$\vec{s}$ perpendicular to $\vec{u}$) of $v$. 
Now, take anywhere
approximately in the middle of $v$, three consecutive strings
$v_a$, $v_b$ and $v_c$. Take them so that $v_a$ and $v_c$ have size $(\ln
n)^2$, but $v_b$ has size $(\ln n)^2-2$. Hence, we assume that the
concatenation $v_av_bv_c$ appears somewhere in the middle of $v$. Let
$x\in\mathbb{Z}^d$ 
designate the point where $v_a$ ends and let $y$ be the point where $v_c$ starts.
Hence
$$v_c=\xi_{y}\xi_{y+\vec{u}}\xi_{y+2\vec{u}}\ldots \xi_{y+((\ln
n)^2-1)\vec{u}}$$
Also the points $x$ and $y$ are 
at distance $(\ln n)^2-1$ from each other and on the line directed along~$\vec{u}$. 
Finally,
$$
v_a=    \xi_{x-((\ln n)^2+1)\vec{u}}
\ldots\xi_{x-2\vec{u}}\xi_{x-\vec{u}}\xi_{x}.
$$
So, the word $v$ is written on the line passing  through $x$ and $y$.
Note that the word $w$ is written on the line which passes  through the points
$x+\vec{s}$ and $y+\vec{s}$. 
Now, because of the event $B^n_2$, for
every nearest neighbor walk path of length $3(\ln n)^2-1$ contained in the region
$\mathcal{K}(4n)$ there is at least one particle following that path up to time~$n^2$.

Take the nearest neighbor walk path 
$R$ on the time interval $[1,3(\ln n)^2]$ which starts in 
$x-((\ln n)^2-1)\vec{u}$ and then goes in the direction of~$\vec{u}$
$(\ln n)^2$ steps. Next  goes one step in the direction of~$\vec{s}$ (and hence
reaches the line where $w$ is written), and  then, $R$ takes $(\ln n)^2$ steps in the
direction of~$\vec{u}$
and reads part of the word $w$. That part we designate by $w_b$.
From there one step in the direction~$(-\vec{s})$, to reach the 
point~$y$ and then all remaining
$(\ln n)^2$ steps are taken in the
direction~$\vec{u}$. During those remaining steps
the walk generates the color record $v_c$. Such a nearest neighbor
walk generates thus the color record $v_aw_bv_c$:
$$\xi\circ R=v_aw_bv_c$$

Hence, by $B^n_2$ at least one particle up to time $n^2$ follows $R$ and 
generates the color record $v_aw_bv_c$. Similarly we can chose a neighbor path that generates $v_av_bv_c$. Since $w_b$ and $v_b$ are contained in $w$ and $v$, respectively, then $w$ and $v$ get selected as neighbors by the forth step of our algorithm.
\end{proof}

\begin{lemma}\label{works}[The algorithm $\Lambda^n$ works.]
Let $A^n$ be the event that  $\Lambda^n$ works. This means that  the outcome after the fourth
phase of $\Lambda^n$ is ``correct''. Thus
$$B^n\cap C^n\cap \ D^n\cap F^n \subset A^n$$
\end{lemma}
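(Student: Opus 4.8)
The plan is to show that the four ``phase-works'' events force, deterministically, the reconstruction property defining $A^n$: there is a point $x_0\in\mathcal{K}(\sqrt n)$ with $\Lambda^n(\chi_{n^4})\sim\xi_{\mathcal{K}_{x_0}(4n)}$. I would begin by collecting what each event yields. On $B^n$ the first phase works, so $W(\xi_{\mathcal{K}(4n)})\subseteq SHORTWORDS^n\subseteq W(\xi_{\mathcal{K}(n^2)})$ (Lemma~\ref{B}). On $C^n$ the second phase works, so $\mathcal{W}_{4n}(\xi_{\mathcal{K}(4n)})\subseteq LONGWORDS^n\subseteq\mathcal{W}_{4n}(\xi_{\mathcal{K}(n^2)})$ (Lemma~\ref{C}); in particular every element of $LONGWORDS^n$ is an honest straight word of $\xi_{\mathcal{K}(n^2)}$, occupying a unique position there, and every length-$4n$ straight word of $\xi_{\mathcal{K}(4n)}$ is available in the bag. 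On $D^n$ the third phase works, so the seed long word $w_L$ selected by $\Lambda^n$ is an honest word of $\xi_{\mathcal{K}(n^2)}$ whose central letter sits at a point $x_0$ with $x_0\in\mathcal{K}(\sqrt n)$ (Lemma~\ref{D}). On $F^n$ the fourth phase works: the three neighbour tests are sound (a pair of words from $LONGWORDS^n$ that passes them is a genuine neighbour pair in $\xi_{\mathcal{K}(n^2)}$, placed at the genuine relative offset) and complete on the words of $\xi_{\mathcal{K}(4n)}$ (every genuine neighbour pair lying in $\mathcal{K}(4n)$ is detected) (Lemma~\ref{4phaseworks}).

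Next I would run the (now deterministic) assembly. Phase~4 places $w_L$ with its central letter at the origin along a chosen coordinate axis; this amounts to fixing an isometry $\varphi\colon\Z^d\to\Z^d$ with $\varphi(0)=x_0$ that maps the ``reconstruction frame'' into the true scenery, so that, as placed, $w_L=\xi\circ\varphi$ on the segment it occupies. I would then induct on the assembly steps: whenever phase~4 appends a word $v\in LONGWORDS^n$ flagged against a word $w$ already in place, soundness of $F^n$ says $v$ and $w$ are genuine neighbours in $\xi_{\mathcal{K}(n^2)}$, and the uniqueness of word-positions forces the placement of $v$ relative to $w$ to be exactly the one prescribed by $\varphi$ (up to the reflections/rotations that fix the part already placed, which are absorbed once and for all into $\varphi$). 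Hence at every stage the partial array equals $\xi\circ\varphi$ on the corresponding finite subset of $\Z^d$, so no wrong cell is ever written.

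It then remains to see that the assembly does not halt before a full box of size $4n$ around $x_0$ is laid down. Because $x_0\in\mathcal{K}(\sqrt n)$, for $n$ large the box $\mathcal{K}_{x_0}(4n)$ --- equivalently, the region swept out when one extends $w_L$ to a full box --- is contained in $\mathcal{K}(n^2)$, and the straight words and neighbour pairs needed to fill it lie in the portion of the scenery controlled by $\mathcal{W}_{4n}(\xi_{\mathcal{K}(4n)})$; so by completeness of $F^n$ every required neighbour relation is detected, and phase~4 keeps appending until the box is complete. Read as a free-standing coloured box, the output is $\xi_{\mathcal{K}_{x_0}(4n)}$ up to the isometry $\varphi^{-1}$, i.e.\ $\Lambda^n(\chi_{n^4})\sim\xi_{\mathcal{K}_{x_0}(4n)}$ with $x_0\in\mathcal{K}(\sqrt n)$, which is precisely $A^n$. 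Chaining the inclusions gives $B^n\cap C^n\cap D^n\cap F^n\subseteq A^n$.

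The main obstacle is not the chaining itself --- which is a direct combination of Lemmas~\ref{B}, \ref{C}, \ref{D} and~\ref{4phaseworks} --- but the bookkeeping around rigidity and orientation: one must check that the connected family of long words of a box, equipped with the correct neighbour relations and their offsets, determines the box uniquely up to a single global isometry (this is where the per-word uniqueness from $C^n_1$, baked into $C^n$ and $D^n$, and the soundness part of $F^n$ are really used), and one must match the size constants so that ``the box the assembly completes'' is indeed $\mathcal{K}_{x_0}(4n)$, with all of its length-$4n$ words and neighbour pairs falling inside the region governed by $\mathcal{W}_{4n}(\xi_{\mathcal{K}(4n)})$. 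Both are routine but need care; the orientation ambiguity --- the algorithm never learns the true coordinate frame, nor whether a given word was read forwards or backwards --- is exactly why the conclusion can only be stated up to equivalence $\sim$.
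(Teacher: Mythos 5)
Your proposal is correct and follows essentially the same route as the paper: chain the conclusions of Lemmas~\ref{B}, \ref{C}, \ref{D} and~\ref{4phaseworks}, then observe that once the neighbour relations among the long words are correctly identified, the placement of the words in a box is rigid up to a global isometry, so the assembled output is equivalent to $\xi$ restricted to a box of size $4n$ centered within $\sqrt{n}$ of the origin. Your write-up is in fact more detailed than the paper's (explicit induction on assembly steps, soundness versus completeness of the neighbour test), but the underlying argument is the same.
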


\begin{proof} 
Recall  we said our algorithm works as a whole correctly, 
if there exists a box $I$ with size $4n$ with center closer than $n^{0.5}$ from 
the origin and such that the restriction $\xi_{I}$ is equivalent to our
reconstructed piece of
scenery. If the last phase of the algorithm works correctly, then
we would like to see that outcome. The event that 
the outcome after the fourth
phase is ``correct'', is denoted by $A^n$ (as already mentioned).
Now, assume that we have the correct short and  long words constructed in
phase one and two, that is, assume that the events $B^n$ and  $C^n$ occur. Assume also that the third phase
of our algorithm works correctly and we get the one long word
picked at the end of phase 3 to be close enough to the origin, hence the event
$D^n$ occurs. When these three events occur (phase 1, 2  and 3 work)  for the final phase 
of the algorithm to work, we then only need to identify which words
of $LONGWORDS^n$ are neighbors of each
other (remember that the words in   $LONGWORDS^n$ are words of  $\xi_{\mathcal{K}(n^2)}$),
so, what we really need  after is finding a way to determine which words
of $\xi_{\mathcal{K}(n^2)}$ are neighbors of each other, and that holds by $F^n$.

Already at this point we observe that, if we have a collection
of objects to be placed in a box of $\Z^{d}$ (in a way that all 
the sites of this box become occupied by exactly one object), and
we know which objects should be placed in neighboring sites,
 then there is a unique (up to translations/reflections)
way to do it. This will assure that the reconstruction works
correctly once we identified the neighboring words.
\end{proof}

\begin{proof}\label{proofT22}[Proof of Theorem \ref{maintheorem}.]  
The last lemma above, Lemma \ref{works}, tells us that for the algorithm
$\Lambda^n$ to work correctly, we just need the events 
$$B^n_1,B^n_2,B^n_3,B^n_4,B^n_5,C^n_1,C^{n^{0.25}}$$
to all hold simultaneously.
Thus, Theorem~\ref{n-theorem}  follows from Lemmas~\ref{B1},
\ref{B2}, \ref{B3}, \ref{B4}, \ref{B5}, \ref{C1} and  \ref{C}.
\end{proof}


%

\section{The infinite scenery: Proof of Theorem \ref{teorema1}}

How do we now reconstruct the infinite scenery $\xi$? So far we
have seen
only methods to reconstruct a piece of $\xi$ on a box of size
$4n$
close to the origin. The algorithm was denoted by $\Lambda^n$,
and the event that it works correctly is designated by the event
$A^n$.
By working correctly, we mean that the piece reconstructed in
reality
is centered not further than $\sqrt{n}$ from the origin. In general, it 
is not possible to be sure about the exact location. So, instead we are
only
able to figure out the location up to a translation of order
$\sqrt{n}$. Also, the piece is reconstructed
only up to equivalence, which means that we do not know in which
direction it
is rotated in the scenery or if it was reflected.
Now, the probability that the reconstruction algorithm
$\Lambda^n$
at level $n$ does not work is small in the sense that it is
finitely summable
over $n$:
$$\sum_nP(A^{nc})<\infty.$$
So, by Borel-Cantelli lemma, when we apply all the reconstruction
algorithms
$\Lambda^1, \Lambda^2, \ldots$, we are almost sure that only
finite many
of them will not work. We use this to assemble the sceneries 
and get the whole scenery a.s. Let us call $\xi^n$ the piece
reconstructed
by $\Lambda^n$. Hence,
$$\xi^n:=\Lambda^n(\chi_0\chi_1\ldots\chi_{n^4}),$$
where $\xi^n$ is a piece of scenery on a box $\mathcal{K}(4n)$.
The next task is to put the reconstructed pieces~$\xi^n$ 
together so that their relative
position to each other is the same  in the scenery~$\xi$.
For this we will use the following rule. 
We proceed in an inductive way in~$n$:
\begin{enumerate}
\item{} Let $\bar{\xi}^n$ designate the piece $\xi^n$ which has
been moved so
as to fit the previously placed pieces. Hence, $\bar{\xi}^n$ is
equivalent to~$\xi^n$.
\item{} We place $\xi^n$ by making it coincide with the previously
placed $\xi^{n-1}$ on a box of side length at least $\sqrt{n}$. In
other words, $\bar{\xi}^n$ is defined to be any piece of the scenery equivalent to
$\xi^n$, and such that on a restriction to a box of size $\sqrt{n}$
it coincides with $\bar{\xi}^{n-1}$. If no such box of size
$\sqrt{n}$
can be found in $\bar{\xi}^{n-1}$ which is equivalent to
a piece of the same size in $\xi^n$, then we ``forget'' about the
previously
placed pieces and just put the piece $\xi^n$ on the box
$\mathcal{K}(4n)$,
that is, we center it around the origin.
\item{}The end result after infinite time is our reconstructed
scenery
denoted by 
$$\bar{\xi}:\mathbb{Z}^d\rightarrow\{0,1,\ldots,\kappa\}.$$
We will prove that a.s.\ $\bar{\xi}$ is equivalent to $\xi$.
So, $\bar{\xi}$ represents our reconstructed $\xi$ (since
reconstruction
in general is only possible up to equivalence). For $\bar{\xi}$
we simply take the point-wise limit of the $\bar{\xi}^n$:
$$\bar{\xi}(\vec{z}):=\lim_{n\rightarrow\infty}\tilde{\xi}^n.$$
For the above definition to be meaningful, take $\tilde{\xi}^n$ to
be the extension of $\bar{\xi}^n$ to all $\mathbb{Z}^d$ by adding
$0$'s 
where $\bar{\xi}^n$ is not defined.
\end{enumerate}

To conclude the proof of Theorem~\ref{teorema1}, it is enough to prove
that the above algorithm defines a scenery $\bar{\xi}$ which is almost
surely equivalent to $\xi$:
\begin{theorem} We have that
$$P(\xi\approx \bar{\xi})=1.$$
\end{theorem}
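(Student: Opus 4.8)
The plan is to combine the already-established single-scale result (Theorem~\ref{n-theorem}, restated as the event $A^n$ that $\Lambda^n$ works) with the Borel--Cantelli lemma and then show that the inductive ``gluing'' procedure described above converges to a scenery equivalent to~$\xi$. First I would record that, by Lemmas~\ref{B1}--\ref{C1}, each of the events $B_1^n,\dots,B_5^n,C_1^n,C^{n^{0.25}}$ has complementary probability bounded by an expression of the form $\exp(-c(\ln n)^2)$ or smaller (in fact some are $\exp(-cn)$ or $\exp(-cn^2)$); hence $P(A^{nc})\le \exp(-C(\ln n)^2)$, and since $\sum_n \exp(-C(\ln n)^2)<\infty$ (the terms decay faster than any fixed negative power of $n$ for $n$ large), we get $\sum_n P(A^{nc})<\infty$. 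By Borel--Cantelli, almost surely there is a (random) $N_0$ such that $A^n$ holds for every $n\ge N_0$; fix such an $\omega$ for the rest of the argument.

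Next I would argue that from stage $N_0$ onward the gluing never resets. By $A^n$, the piece $\xi^n=\Lambda^n(\chi_0\dots\chi_{n^4})$ is equivalent to $\xi_{\mathcal{K}_{x_n}(4n)}$ for some $x_n\in\mathcal{K}(\sqrt n)$, and similarly $\xi^{n-1}\sim\xi_{\mathcal{K}_{x_{n-1}}(4(n-1))}$ with $x_{n-1}\in\mathcal{K}(\sqrt{n-1})$. The two boxes $\mathcal{K}_{x_n}(4n)$ and $\mathcal{K}_{x_{n-1}}(4(n-1))$ both lie within $O(\sqrt n)$ of the origin and each has side of order $n$, so their overlap contains a box of side at least, say, $\sqrt n$ once $n$ is large (the centers differ by at most $2\sqrt n$, which is negligible compared to $4(n-1)$). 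Inside that common box the restrictions of $\xi$ agree, so there is a box of side $\sqrt n$ on which (a translate/reflection of) $\xi^{n-1}$ and (a translate/reflection of) $\xi^n$ coincide; hence the ``no such box'' escape clause in step~2 is not triggered for $n>N_0$, and $\bar\xi^n$ is produced by actually matching $\bar\xi^{n-1}$. I would then note the key uniqueness point: a piece of an i.i.d.\ scenery of side $\sqrt n$ is, $whp$, \emph{rigid} — it determines the isometry that places it, up to the global ambiguity already present — because two i.i.d.\ patches of that size in a box of polynomial radius coincide under a nontrivial isometry with probability at most (number of isometries)$\times$(number of positions)$\times\kappa^{-(\sqrt n)^d}$, which is summable; this is the same first-moment estimate used in Lemmas~\ref{B3}, \ref{C1}. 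Consequently the isometry realized in step~2 is forced, and the pieces $\bar\xi^n$ are nested consistently: $\bar\xi^{n-1}$ agrees with $\bar\xi^n$ on the whole of $\bar\xi^{n-1}$ (not merely on the $\sqrt n$-box), for all $n>N_0$.

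Given nesting, the point-wise limit $\bar\xi(\vec z)=\lim_n\tilde\xi^n(\vec z)$ exists for every $\vec z\in\Z^d$: each $\vec z$ is eventually inside $\mathcal{K}(4n)$ (shifted appropriately), and from that point on $\tilde\xi^n(\vec z)$ is constant. Finally I would exhibit the global isometry showing $\bar\xi\sim\xi$. Let $\varphi_{N_0}$ be the isometry with $\bar\xi^{N_0}=\xi\circ\varphi_{N_0}$ on $\mathcal{K}(4N_0)$ (provided by $A^{N_0}$). By the nesting and rigidity just established, every subsequent $\bar\xi^n$ extends $\bar\xi^{N_0}$ and equals $\xi\circ\varphi$ on its domain for the \emph{same} isometry $\varphi=\varphi_{N_0}$ — any other isometry compatible with the patch on a $\sqrt n$-box would contradict rigidity. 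Since the domains of the $\bar\xi^n$ exhaust $\Z^d$, we conclude $\bar\xi=\xi\circ\varphi$ on all of $\Z^d$, i.e.\ $\bar\xi\sim\xi$. As this holds on the full-probability event where $A^n$ is eventually true and the rigidity estimates hold, $P(\xi\approx\bar\xi)=1$, which is the claim; Theorem~\ref{teorema1} follows with $\Lambda$ defined by this inductive gluing of the $\Lambda^n$.

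The main obstacle I expect is making the rigidity/uniqueness step fully rigorous: one must rule out, simultaneously for all large $n$, that a $\sqrt n$-patch of $\xi$ near the origin admits a coincidental second embedding (under some isometry, possibly at a different location within the polynomially large region that all the $\xi^n$ can a priori occupy). This requires a union bound over isometries and over a polynomially (in $n$) large window, against the doubly-exponentially small probability $\kappa^{-c n^{d/2}}$, and then summability in $n$ — routine in spirit and of the same type as Lemmas~\ref{B3}--\ref{C1}, but it is the place where one genuinely needs $d\ge 1$ and $\kappa\ge 2$, and where care is needed so that the ``matching box of size $\sqrt n$'' in step~2 really does pin down the placement uniquely rather than merely up to the intended global equivalence.
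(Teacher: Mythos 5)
Your proposal is correct and follows essentially the same route as the paper: Borel--Cantelli applied to the events $A^n$ (using the summable bound $\exp(-C(\ln n)^2)$), together with an almost-sure uniqueness/rigidity property of $\sqrt{n}$-sized patches of the i.i.d.\ scenery near the origin (the paper's event $G^n$, also controlled by a first-moment estimate and Borel--Cantelli), which forces the inductive gluing to stabilize and yields a limit equivalent to $\xi$. The "rigidity" step you flag as the main obstacle is exactly what the paper packages into $G^n$, including the requirement that no nontrivial reflection/rotation fixes a $\sqrt{n}$-patch.
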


\begin{proof} Let $G^n$ be the event that in the restriction 
of $\xi$ to the box $\mathcal{K}(2n+2)$ any two restrictions
to a box of size $\sqrt{n}$ are different of each other.
By this we mean, that if $V_1$ and~$V_2$ are two boxes
of size $\sqrt{n}$ contained in $\mathcal{K}(2n+1)$,
then if the restriction $\xi_{V_1}$ is equivalent to $\xi_{V_2}$
then $V_1=V_2$. Also, for $G^n$ to hold, we require that
for any box $V_1\subset \mathcal{K}(2n+2)$ of size $\sqrt{n}$ 
the only reflection and/or
rotation which leaves $\xi_{V_1}$ unchanged is the identity.

Now note that when the event $G^n$ occurs, and the piece $\xi^{n+1}$
and $\xi^n$ are correctly reconstructed in the sense defined before,
then our placing them together works properly. This means that in
that case, the relative position of $\bar{\xi}^{n+1}$ and
$\bar{\xi}^n$ is that same as the corresponding pieces in $\xi$.
It is elementary to obtain that the event $G^n$ has  
probability at least $1-c_1n^{d}e^{-c_2n^{d/2}}$.
This means that $\sum_nP(G^{nc})<\infty$, and so,
 by Borel-Cantelli lemma, all but a finite number of the events
$G^n$ occur. Also, we have seen that the algorithm at level~$n$
has high probability to do the reconstruction correctly, and 
$\sum_nP(A^{nc})<\infty.$

Hence, again by Borel-Cantelli lemma,
all but a finite number of the reconstructed scenery~$\xi^n$
will be equivalent to a restriction of $\xi$ to a box close
to the origin. We also see that the close to the origin
for the box of the $n$-th reconstruction means not
further than~$\sqrt{n}$.  Thereof we have
that all but a finite number of the pieces $\bar{\xi}^n$ are positioned correctly
with respect to each other. Since we take a limit for getting
$\xi$, a finite number of $\bar{\xi}^n$'s alone have no effect on
the limit, and so the algorithm works.
\end{proof}


\bibliographystyle{plain}

\end{document}